\documentclass[11pt, reqno]{article}
\usepackage{a4wide}
\usepackage{tocloft}
\usepackage{amssymb}
\usepackage{amsmath}
\usepackage{amsthm}
\usepackage{tikz}
\usepackage{amscd}
\usepackage[curve]{xypic}
\usepackage{hyperref}
\usepackage[all]{xy}
\usepackage{color}
\theoremstyle{plain}
\usepackage[margin=1in]{geometry}

\newcommand{\id}{\operatorname{id}}

\newcommand{\sch}[1]{\operatorname{{\bf  #1}}}

\newcommand{\ind}{\operatorname{ind}}

\newcommand{\Char}{\operatorname{Char}}
\newcommand{\res}{\operatorname{res}}
\newcommand{\ho}{\operatorname{Hom}}

\newcommand{\img}{\operatorname{img}}

\newcommand{\Rep}{\operatorname{Rep}}
\newcommand{\Mod}{\operatorname{Mod}}

\newcommand{\End}{\operatorname{End}}
\newcommand{\Ext}{\operatorname{Ext}}

\newcommand{\bv}{\operatorname{{\bf v}}}

\newtheorem{theorem}{Theorem}[section]
\newtheorem{corollary}[theorem]{Corollary}
\newtheorem{lemma}[theorem]{Lemma}
\newtheorem{remark}[theorem]{Remark}
\newtheorem{proposition}[theorem]{Proposition}

\setlength{\parskip}{.4em}

\author{\large{ Santosh Nadimpalli }}
\date{\today}

\begin{document}
\title{On Extensions of supersingular representations of
  ${\rm SL}_2(\mathbb{Q}_p)$.}
\maketitle
\begin{abstract}
  In this note for $p>5$ we calculate the dimensions of
  $\Ext^1_{{\rm SL}_2(\mathbb{Q}_p)}(\tau, \sigma)$ for any two
  irreducible supersingular representations $\tau$ and $\sigma$ of
  ${\rm SL}_2(\mathbb{Q}_p)$. 
\end{abstract}
\section{Introduction}
In this note we calculate the space of extensions of supersingular
representations of ${\rm SL}_2(\mathbb{Q}_p)$ for $p>5$. The
dimensions of the space of extensions between irreducible
supersingular representations of ${\rm GL}_2(\mathbb{Q}_p)$ are
calculated by Pa\v{s}k\={u}nas in
\cite{paskunas_extensions}. Understanding extensions between
irreducible smooth representations play a crucial role in
Pa\v{s}k\={u}nas work on the image of Colmez Montreal functor in (see
\cite{paskunas_colmez_functor}). We hope that these results have
similar application to mod $p$ and $p$-adic local Langlands
correspondence for ${\rm SL}_2(\mathbb{Q}_p)$.

Let $G$ be the group ${\rm GL}_2(\mathbb{Q}_p)$, $K$ be the maximal
compact subgroup ${\rm GL}_2(\mathbb{Z}_p)$ and $Z$ be the center of
$G$. We denote by $I(1)$ the pro-$p$ Iwahori subgroup of $G$.  We
denote by $G_S$ the special linear group ${\rm
  SL}_2(\mathbb{Q}_p)$. For any subgroup $H$ of
${\rm GL}_2(\mathbb{Q}_p)$ we denote by $H_S$ the subgroup
$H\cap {\rm SL}_2(\mathbb{Q}_p)$. All representations in this note are
defined over vector spaces over $\bar{\mathbb{F}}_p$. Let $\sigma$ be
an irreducible smooth representation of $K$ and $\sigma$ extends
uniquely as a representation of $KZ$ such that $p\in Z$ acts
trivially. The Hecke algebra $\End_G(\ind_{KZ}^G\sigma)$ is isomorphic
to $\bar{\mathbb{F}}_p[T]$. For any constant $\lambda$ in
$\overline{\mathbb{F}}_p^{\times}$ let $\mu_\lambda$ be the unramified
character of $Z$ such that $\mu_{\lambda}(p)=\lambda$. Let
$\pi(\sigma, \mu_\lambda)$ be the representation
$$\dfrac{\ind_{KZ}^G\sigma}{T(\ind_{KZ}^G\sigma)}
\otimes(\mu_\lambda\circ\det).$$ The representations
$\pi(\sigma, \mu_\lambda)$ are irreducible (see
\cite{breuil_gl_2_mod-p_1}) and are called {\bf supersingular
  representations} in the terminology of Barthel--Livn\'e.

Let $\sigma_r$ be the representation ${\rm Sym}^r\bar{\mathbb{F}}_p$
of ${\rm GL}_2(\mathbb{F}_p)$. We consider $\sigma_r$ as a
representation of $K$ by inflation. The $K$-socle of
$\pi(\sigma_r, \mu_\lambda)$ is a direct sum of two irreducible smooth
representations $\sigma_r$ and $\sigma_{p-1-r}$. Let $\pi_{0,r}$ and
$\pi_{1, r}$ be the $G_S$ representations generated by
$\sigma_r^{I(1)}$ and $\sigma_{p-1-r}^{I(1)}$. The representations
$\pi_{0, r}$ and $\pi_{1,r}$ are irreducible supersingular
representations of $G_S$ and
$$\res_{G_S}\pi(\sigma_r, \mu_\lambda)\simeq \pi_{0,r}\oplus
\pi_{1,r}.$$ Any irreducible supersingular representation of $G_S$ is
isomorphic to $\pi_{i,r}$ for some $r$ such that $0\leq r\leq p-1$ and
$i\in \{0,1\}$. Moreover the only isomorphisms between $\pi_{i,r}$ are
$\pi_{0,r}\simeq \pi_{1,p-1-r}$ and $\pi_{1,r}\simeq \pi_{0,p-1-r}$
(see \cite{sl_2-mod_p-classification}). Our main theorem on extensions
of supersingular representations of $G_S$ is:
\begin{theorem}\label{main_theorem}
  Let $p\geq 5$ and $0\leq r\leq (p-1)/2$. For any irreducible
  supersingular representation $\tau$ of $G_S$ the space
  $\Ext_{G_S}^1(\tau, \pi_{i,r})$ is non-zero if and only if
  $\tau\simeq \pi_{j,r}$ for some $j\in \{0,1\}$. If
  $0\leq r< (p-1)/2$ then
  $\dim_{\bar{\mathbb{F}}_p}\Ext_{G_S}^1 (\pi_{i,r}, \pi_{j,r})=2$ for
  $i\neq j$ and
  $\dim_{\bar{\mathbb{F}}_p}\Ext_{G_S}^1 (\pi_{i,r},
  \pi_{i,r})=1$. For $r=(p-1)/2$ we have
  $\dim_{\bar{\mathbb{F}}_p}\Ext_{G_S}^1 (\pi_{0,r}, \pi_{0,r})=3$.
\end{theorem}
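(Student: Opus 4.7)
The plan is to compute $\Ext^1_{G_S}$ by descending from Pa\v{s}k\={u}nas's computation on $G := \g{2}{\mathbb{Q}_p}$ \cite{paskunas_extensions}, exploiting the known decomposition $\res_{G_S}\pi(\sigma_r, \mu_\lambda) \simeq \pi_{0,r} \oplus \pi_{1,r}$. I carry out the descent through the intermediate group $H := G_S Z$, which is normal in $G$ with finite quotient $G/H \simeq \mathbb{Q}_p^\times/(\mathbb{Q}_p^\times)^2 \simeq (\mathbb{Z}/2)^2$ for $p$ odd.

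First, since $[G : H] < \infty$, the induction $\ind_H^G$ is exact and the projection formula gives $\ind_H^G(\pi|_H) \simeq \bigoplus_\chi \pi \otimes \chi$ over the four characters $\chi$ of $G/H$. Frobenius reciprocity then yields
$$\Ext^1_H(\pi|_H, \pi|_H) \simeq \bigoplus_{\chi} \Ext^1_G(\pi \otimes \chi, \pi),$$
each term on the right being computed in \cite{paskunas_extensions}, which fixes $\dim \Ext^1_H$.

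Next I descend from $H$ to $G_S$ via the Hochschild--Serre spectral sequence for $1 \to G_S \to H \to Z/Z_S \to 1$. The quotient $Z/Z_S \simeq \mathbb{Q}_p^\times/\{\pm 1\}$ decomposes as $(\mathbb{F}_p^\times/\{\pm 1\}) \times (1 + p\mathbb{Z}_p) \times p^{\mathbb{Z}}$, giving $\dim H^i(Z/Z_S, \bar{\mathbb{F}}_p) = 1, 2, 1$ for $i = 0, 1, 2$ (the first factor has order coprime to $p$ and the other two have $p$-cohomological dimension one). Because $Z$ acts by equal scalars on source and target, the $Z/Z_S$-action on $\End_{G_S}(\pi|_{G_S})$ and on $\Ext^1_{G_S}(\pi|_{G_S}, \pi|_{G_S})$ is trivial, and the degree-$1$ edge sequence
$$0 \to \End_{G_S}(\pi|_{G_S}) \otimes H^1(Z/Z_S) \to \Ext^1_H(\pi|_H, \pi|_H) \to \Ext^1_{G_S}(\pi|_{G_S}, \pi|_{G_S}) \xrightarrow{d_2} \End_{G_S}(\pi|_{G_S}) \otimes H^2(Z/Z_S)$$
pins down the total $\sum_{i,j} \dim \Ext^1_{G_S}(\pi_{i,r}, \pi_{j,r})$ once $d_2$ is understood.

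To separate the four individual dimensions in the range $r < (p-1)/2$, I exploit the outer automorphism of $G_S$ given by conjugation by $\alpha := \diag(p, 1) \in G \setminus H$, which swaps $\pi_{0,r}$ with $\pi_{1,r}$; combined with the $G/H$-grading of $\Ext^1_H$ above (encoding the $\alpha$-action via the four characters $\chi$), this yields enough linear relations to extract $\dim \Ext^1_{G_S}(\pi_{i,r}, \pi_{i,r}) = 1$ and $\dim \Ext^1_{G_S}(\pi_{i,r}, \pi_{1-i,r}) = 2$. The vanishing for $s \notin \{r, p-1-r\}$ follows from the vanishing of all $\Ext^1_G(\pi(\sigma_s, \mu_\bullet) \otimes \chi, \pi(\sigma_r, \mu_\lambda))$ in \cite{paskunas_extensions}. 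The exceptional case $r = (p-1)/2$ forces $\pi_{0,r} \simeq \pi_{1,r}$ and $\End_{G_S}(\pi|_{G_S}) \simeq M_2(\bar{\mathbb{F}}_p)$, and running the same spectral-sequence argument with this non-commutative endomorphism ring yields $\dim \Ext^1_{G_S}(\pi_{0,r}, \pi_{0,r}) = 3$. The main obstacle is the explicit computation of the differential $d_2$ in the spectral sequence, together with, in the exceptional case, the careful bookkeeping of the $M_2(\bar{\mathbb{F}}_p)$-bimodule structure that accounts for the jump from $2$ to $3$.
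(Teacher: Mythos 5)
Your route is genuinely different from the paper's (which works inside $G_S$ itself: it uses Kozio\l's equivalence between $\Rep_{G_S}^{I(1)_S}$ and $\Mod_{\mathcal H}$, Schneider--Ollivier resolutions to compute $\Ext^1_{\mathcal H}(M(i,r),M(j,s))$, an explicit pullback/pushout construction of a non-split self-extension to get lower bounds, and restriction to the two maximal compact subgroups of $G_S$ plus Pa\v{s}k\={u}nas's $\Ext^1_K(\sigma_r\otimes\det^a,\tilde\pi_r)$ computations to get matching upper bounds). But as written your argument has a genuine gap, and it is exactly the one you flag at the end: the differentials $d_2\colon H^0(Q,\Ext^1_{G_S})\to H^2(Q,\End_{G_S})$ are never computed, and there is one such unresolved differential at \emph{each} descent step (from the fixed-central-character category in which Pa\v{s}k\={u}nas actually works, to $G$, to $H=G_SZ$, to $G_S$). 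Each five-term sequence therefore only bounds the next group from above and below with an ambiguity of $\dim\End\cdot\dim H^2(Q)$, and your proposal contains no independent mechanism for producing lower bounds on $\Ext^1_{G_S}$ that would let you close this gap. This is precisely why the paper does not argue purely by descent: it manufactures lower bounds on the $G_S$-side (non-vanishing of $\Ext^1_{\mathcal H}$ and of the map to $\ho_{\mathcal H}(\mathcal I(\pi_{i,r}),\mathbb R^1\mathcal I(\pi_{i,r}))$) and only uses the $GL_2$-theory for upper bounds via \eqref{sandwich_sequence_1}.

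Two further steps are asserted but would not go through as stated. First, the symmetry coming from conjugation by $\diag(p,1)$ only yields $\Ext^1(\pi_{0,r},\pi_{0,r})\simeq\Ext^1(\pi_{1,r},\pi_{1,r})$ and $\Ext^1(\pi_{0,r},\pi_{1,r})\simeq\Ext^1(\pi_{1,r},\pi_{0,r})$; together with knowledge of the total $\sum_{i,j}\dim\Ext^1(\pi_{i,r},\pi_{j,r})$ this determines only $a+b$ where $a=\dim\Ext^1(\pi_{i,r},\pi_{i,r})$ and $b=\dim\Ext^1(\pi_{i,r},\pi_{1-i,r})$, not $a$ and $b$ separately; the claim that the $G/H$-grading of $\Ext^1_H$ refines into the $(i,j)$-block decomposition is not substantiated (the characters of $G/H$ index twists of the ambient $GL_2$-representation, not the two $G_S$-constituents). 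The paper separates the off-diagonal blocks cleanly because for $i\neq j$ the five-term sequence \eqref{fiveterm} degenerates to $\Ext^1_{G_S}(\pi_{i,r},\pi_{j,r})\simeq\Ext^1_{\mathcal H}(M(i,r),M(j,r))$, something your framework cannot see. Second, the input $\dim\Ext^1_G(\pi\otimes\chi,\pi)$ in the full smooth category (no fixed central character) is not literally in \cite{paskunas_extensions}; extracting it is itself a nontrivial central-descent computation with its own edge maps. So the proposal is a plausible program rather than a proof; to complete it you would either have to compute the $d_2$'s explicitly or, as the paper does, import independent lower bounds from the pro-$p$ Iwahori--Hecke algebra.
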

We briefly explain the method of proof. We essentially follow
\cite{paskunas_extensions}. The functor sending a smooth
representation to its $I(1)_S$-invariants induces an equivalence of
categories of smooth representations of $G_S$ generated by
$I(1)_S$-invariants and the module category of the pro $p$-Iwahori
Hecke algebra (see \cite[Theorem 5.2]{koziol_sl_2_equiv}). We use the
$\Ext$ spectral sequence thus obtained by this equivalence of
categories to calculate $\Ext^1_{G_S}$. Extensions of pro $p$-Iwahori
Hecke algebra modules are calculated from resolutions of Hecke modules
due to Schneider and Ollivier. We crucially use results from work of
Pa\v{s}k\={u}nas \cite{paskunas_extensions}. We first obtain lower
bounds on the dimensions of $\Ext^1_{G_S}$ spaces using the spectral
sequence and then obtain upper bounds using Pa\v{s}k\={u}nas results
on $\Ext^1_{K}(\sigma, \pi(\sigma, \mu_\lambda))$.

{\bf Acknowledgements} I thank Eknath Ghate for showing the
fundamental paper \cite{paskunas_extensions} and for his interest in
this work and discussions on the role of extensions in mod
$p$-Langlands. I want to thank Radhika Ganapathy for various
discussions on mod $p$ representations and for her mod $p$ seminar at
the Tata Institute.
 
\section{Pro-\texorpdfstring{$p$}{} Iwahori Hecke algebra}
Let $B$ be the Borel subgroup consisting of invertible upper
triangular matrices, $U$ be the unipotent radical of $B$ and $T$ be
the maximal torus consisting of diagonal matrices. We denote by
$\bar{U}$ the unipotent radical of $\bar{B}$ the Borel subgroup
consisting of invertible lower triangular matrices. We denote by $I$
the standard Iwahori-subgroup of $G$.  Let $I(1)$ be the pro-$p$
Iwahori subgroup of $G$ and $I(1)_S$ be the pro-$p$-Iwahori subgroup
of $G_S$. We note that $I(1)_S(Z\cap I(1))$ is equal to $I(1)$.  Let
$\mathcal{H}$ be the pro-$p$ Iwahori--Hecke algebra
$\End_{G}(\ind_{I(1)_S}^{G_S}\id)$. Let $\Rep_{G_S}$ and
$\Rep_{G_S}^{I(1)_S}$ be the category of smooth representations of
$G_S$ and its full subcategory consisting of those smooth
representations generated by $I(1)_S$-invariant vectors respectively.
We denote by $\Mod_{\mathcal{H}}$ the category of modules over the
ring $\mathcal{H}$. We have two functors
\begin{align*}
\mathcal{I}:\Rep_{G_S}^{I(1)_S}&\rightarrow \Mod_{\mathcal{H}}\\
&\mathcal{I}(\pi)=\pi^{I(1)_S}
\end{align*}
and
\begin{align*}
  \mathcal{T}:\Mod_{\mathcal{H}}&\rightarrow\Rep_{G_S}^{I(1)_S}\\ 
                    &\mathcal{T}(M)=M\otimes_{\mathcal{H}}\ind_{I(1)_S}^{G_S}\id.
\end{align*}
From \cite[Theorem 5.2]{koziol_sl_2_equiv} the functors $\mathcal{T}$
and $\mathcal{I}$ are quasi-inverse to each other. Let $\sigma$ and
$\tau$ be any two smooth representations of $G_S$ and $\sigma_1$ be
the $G_S$ subrepresentation of $\sigma$ generated by
$I(1)_S$-invariants of $\sigma$. We have
\begin{equation}
\ho_{G}(\tau, \sigma)=\ho_{G}(\tau,
\sigma_1)=\ho_{H}(\mathcal{I}(\tau), \mathcal{I}(\sigma_1))=
\ho_{H}(\mathcal{I}(\tau), \mathcal{I}(\sigma)).
\end{equation}
We get a Grothendieck spectral sequence with ${\rm E}_2^{ij}$ equal to
$\Ext^i(\mathcal{I}(\tau), \mathbb{R}^j\mathcal{I}(\sigma))$ such that 
\begin{equation}\label{basic_exact_sequence}
\Ext^i(\mathcal{I}(\tau),
\mathbb{R}^j\mathcal{I}(\sigma))\Rightarrow 
\Ext^{i+j}_{G}(\tau, \sigma).
\end{equation}
The $5$-term exact sequence associated to the above spectral sequence
gives the following exact sequence: 
\begin{align}\label{fiveterm}
0\rightarrow &\Ext^1_{\mathcal{H}}(\mathcal{I}(\tau),
  \mathcal{I}(\sigma))\xrightarrow{i} \Ext^1_{G}(\tau,
  \sigma)\xrightarrow{\delta}\\
\nonumber &\ho_{\mathcal{H}}(\mathcal{I}(\tau),
  \mathbb{R}^1\mathcal{I}(\sigma))\rightarrow
  \Ext^2_{\mathcal{H}}(\mathcal{I}(\tau),
  \mathcal{I}(\sigma))\rightarrow \Ext^2_{G}(\tau, \sigma)
\end{align}
for all $\tau$ such that $\tau=<G_S\tau^{I(1)}>$.  In particular we
apply these results when $\tau$ and $\sigma$ are irreducible
supersingular representations of $G_S$. We first recall the structure
of the ring $\mathcal{H}$, its modules $M(i, r)=\pi_{i, r}^{I(1)}$ for
$i$ in $\{0,1\}$ and $0\leq r\leq p-1$. The $\mathcal{H}$ module
$M(i,r)$ is a character and we first calculate the dimensions of the
spaces $\Ext^1_{\mathcal{H}}(M(i, r), M(j,s))$.

Let $T_S^0$ and $T_S^1$ be the maximal compact subgroup of $T_S$ and
its maximal pro-$p$-subgroup. We denote by $s_0$, $s_1$ and $\theta$
the matrices $\begin{pmatrix}0&1\\-1&0\end{pmatrix}$,
$\begin{pmatrix}0&-p^{-1}\\p&0\end{pmatrix}$ and
$\begin{pmatrix}p&0\\0&p^{-1}\end{pmatrix}$ respectively. Let $N(T_S)$
be the normaliser of the torus. The extended Weyl group
$W=\theta^{\mathbb{Z}}\coprod s_0\theta^{\mathbb{Z}}$ sits into an
exact sequence of the form
$$0\rightarrow \Omega:=\dfrac{T_S^0}{T_S^1}\rightarrow
\tilde{W}:=\dfrac{N(T_S)}{T_S^1}\rightarrow W=\dfrac{N(T_S)}{T_S^0}\rightarrow
0.$$ The length function $l$ on $W$, given by $l(\theta^i)=|2i|$ and
$l(s_0\theta^i)=|1-2i|$, extends to a function on $\tilde{W}$ such
that $l(\Omega)=0$. Let $T_w$ be the element $\Char_{I(1)wI(1)}$ for
all $w\in \tilde{W}$. We denote by $e_1$ the element
$\sum_{w\in \Omega}T_w$. The functions $T_w$ span $\mathcal{H}$ and
the relations in $\mathcal{H}$ are given by
\begin{align*}
T_wT_v&=T_{wv} \ \text{whenever} \ l(v)+l(w)=l(vw),\\
T_{s_i}^2&=-e_1T_{s_i}.
\end{align*}
The pro-$p$-Iwahori Hecke algebra is generated by $T_wT_{s_i}$ for $w$
in $\Omega$. For any character $\chi$ of $\Omega$ let $e_{\chi}$ be
the element $\sum_{w\in \Omega}\chi^{-1}(w)T_w$. Let $\gamma$ be a
$W_0$ orbit of the characters $\chi$ and $e_{\gamma}$ be the element
$\sum_{\chi\in \gamma}e_{\chi}$.  The elements
$\{e_{\gamma}; \gamma\in \hat{\Omega}/W_0\}$ are central
idempotents in the ring $\mathcal{H}$ and we have 
\begin{equation}\label{central_idempotents}
  \mathcal{H}=\bigoplus_{\hat{\Omega}/W_0}\mathcal{H}e_{\gamma}.
\end{equation}

For the group $G_S$, we know that $\mathcal{H}$ is the affine Hecke
algebra. The characters of affine Hecke algebra are described in a
simple manner we recall this for $G_S$.  Let $I$ be a subset of
$\{s_0, s_1\}$ and $W_I$ be the subgroup of $W$ generated by elements
of $I$ and $W_{\emptyset}$ is trivial group. The characters of
$\mathcal{H}$ are parametrised by pairs $(\lambda, I)$ where $\lambda$
is a character of $\Omega$ and $I\subset S_{\lambda}$. For such a pair
$(\lambda, I)$ the character $\chi_{\lambda, I}$ associated to it is
given by
\begin{align}\label{hecke_character}
  &\chi_{\lambda, I}(T_{wt})=0\ \text{for all}\  w\in W\backslash
    W_I\ \text{and}\ \text{for all} \ \ t\in \Omega,\\
  &\chi_{\lambda, I}(T_{wt})=\lambda(t)(-1)^{l(w)}\ \text{for all}\
    w\in W_I\ \text{and}\ \text{for all} \ \ t\in \Omega.
\end{align}
If $\lambda$ is nontrivial then we have
$\chi_{\lambda, \emptyset}(T_{t})=\lambda(t)$, for all $t\in \Omega$
and $\chi_{\lambda, \emptyset}(T_{wt})=0$ for all $w\neq \id$ and
$t\in \Omega$.

We denote by $\chi_{r, \emptyset}$ the character
$\chi_{x\mapsto x^r, \emptyset}$.  From the above description we get
that $M(0, r)=\chi_{r, \emptyset}$ and
$M(1,r)=\chi_{p-1-r, \emptyset}$ for $r\notin\{0, p-1\}$. If
$r\in \{0, p-1\}$ then \cite[Proposition 3.9]{canonical_torsion} says
that $\chi_{\id, \emptyset}$ and $\chi_{\id, S}$ are not supersingular
characters. This shows that $M(i, r)$ is either $\chi_{\id, I}$ or
$\chi_{\id, J}$, for $r\in \{0, p-1\}$, where $I=\{s_0\}$ and
$J=\{s_1\}$. Since the element $T_{s_0}$ belongs to pro-$p$
Iwahori--Hecke algebra of $G$ and using the presentation in
\cite[Corollary 6.4]{towards_mop_p_langlands} we obtain that
$M(1,0)=\chi_{\id, I}$ and $M(0, 0)=\chi_{\id, J}$. Similarly
$M(1,p-1)$ is given by the character $\chi_{\id, J}$ and $M(0, p-1 )$
is given by the character $\chi_{\id, I}$. Let $0\leq r,s\leq (p-1)/2$
then \eqref{central_idempotents} shows that
\begin{equation}\label{distinct_blocks}
\Ext^1_{\mathcal{H}}(M(i,r), M(j,s))=0
\end{equation} for $r\neq s$. 
\subsection{Resolutions of Hecke modules}
In order to calculate extensions between the characters $M(i,r)$, we
use resolutions constructed by Schneider and Ollivier for
$\mathcal{H}$. Let $\mathfrak{X}$ be the Bruhat--Tits tree of $G_S$
and let $A(T_S)$ be the standard apartment associated to $T_S$. We fix
an edge $E$ and vertices $v_0$ and $v_1$ contained in $E$ such that
the $G_S$-stabiliser of $v_0$ is $K_S$.  For any facet $F$ of
$\mathfrak{X}$ we denote by $\sch{G}_F$ the $\mathbb{Z}_p$-group
scheme with generic fibre $\sch{SL}_2$ and $\sch{G}_F(\mathbb{Z}_p)$
is the $G$-stabiliser of $F$. We denote by $I_F$ the subgroup of
$\sch{G}_F(\mathbb{Z}_p)$ whose elements under mod-~$\mathfrak{p}$
reduction of $\sch{G}_F(\mathbb{Z}_p)$ belong to the
$\mathbb{F}_p$-points of the unipotent radical of
$\sch{G}_F\times \mathbb{F}_p$. We denote by $\mathcal{H}_F$ the
finite subalgebra of $\mathcal{H}$ defined as
$$\mathcal{H}_F:=
\End_{\sch{G}_F(\mathbb{Z}_p)}(\ind_{I_F}^{\sch{G}_F(\mathbb{Z}_p)}(\id)).$$
In particular $\mathcal{H}_{E}$ is a semi-simple algebra.

For any $\mathcal{H}$-module $\mathfrak{m}$ the construction of
Schneider and Ollivier \cite[Theorem 3.12,
(6.4)]{pro-p-hecke-gorenstein} gives us a
$(\mathcal{H}, \mathcal{H})$-exact resolution
\begin{equation}\label{Hecke_resolution}
  0\rightarrow \mathcal{H}\otimes_{\mathcal{H}_E}\mathfrak{m}
  \xrightarrow{\delta_1}
  (\mathcal{H}\otimes_{\mathcal{H}_{v_0}}\mathfrak{m})\oplus
  (\mathcal{H}\otimes_{\mathcal{H}_{v_1}}\mathfrak{m})
  \xrightarrow{\delta_0}\mathfrak{m}\rightarrow 0.
\end{equation} 
Using the resolution \eqref{Hecke_resolution} and the observation that
$\mathcal{H}_E$ is semi-simple for $p\neq 2$ we get that
\begin{equation}\label{ext_two_points}
  0\rightarrow 
  \ho_{\mathcal{H}}(\mathfrak{m}, \mathfrak{n})\rightarrow
  \bigoplus_{v_0,v_1}\ho_{\mathcal{H}_{v_i}}(\mathfrak{m},
  \mathfrak{n})\rightarrow\\
  \ho_{\mathcal{H}_E}(\mathfrak{m},
  \mathfrak{n})\xrightarrow{\delta}
  \Ext^1_{\mathcal{H}}(\mathfrak{m},
  \mathfrak{n})\rightarrow \bigoplus_{v_0,
    v_1}\Ext^1_{\mathcal{H}_{v_i}}(\mathfrak{m},
  \mathfrak{n})\rightarrow 0
\end{equation}
Note that we have an isomorphism of algebras
$$\mathcal{H}_{v_0}\simeq
\mathcal{H}_{v_1}\simeq \End_{{\rm
    SL}_2(\mathbb{F}_p)}(\ind_{N(\mathbb{F}_p)}^{{\rm
    SL}_2(\mathbb{F}_p)}\id).$$ The above isomorphism is not a
canonical isomorphism. Let $K_0$ and $K_1$ be the compact open
subgroups $K\cap G_S$ and $K^{\Pi}\cap G_S$ respectively. 
\subsection{Extensions of supersingular modules over
  pro-\texorpdfstring{$p$}{} Iwahori--Hecke algebra.}
The Hecke algebra $\mathcal{H}_{v_i}$ is isomorphic to
$\End_{K_i}(\ind_{I(1)}^{K_i}\id)$. The Hecke algebra
$\mathcal{H}_{v_i}$ is generated by $T_t$ and $T_{s_i}$ for
$t\in \Omega$.  The relations among them are given by
\begin{align*}
&T_{t_1}T_{t_2}=T_{t_1t_2},\\
             & T_tT_{s_i}=T_{ts_i}=T_{s_it^{-1}}=T_{s_i}T_{t^{-1}},\\
             & T_{s_i}^2=-e_1T_{s_i}
\end{align*}
where $e_1=\sum_{t\in \Omega}T_t$.
\begin{lemma}\label{regular_hecke_ext}
  Let $0<r\leq (p-1)/2$ the space
  $\Ext^1_{\mathcal{H}}(M(i,r), M(j,s))$ is non-zero if and only if
  $i\neq j$ and has dimension $2$ when $i=j$. If $r=(p-1)/2$ then the
  space $\Ext^1_{\mathcal{H}}(M(i,r), M(i,r))$ has dimension $2$.
\end{lemma}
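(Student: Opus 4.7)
The plan is to run the six-term exact sequence \eqref{ext_two_points} coming from the Schneider--Ollivier resolution \eqref{Hecke_resolution}, fed with the characters $M(i,r)$ and $M(j,s)$. Because $\mathcal{H}_E$ is semisimple, the only $\Ext^1$ contributions on the right come from the two vertex Hecke algebras $\mathcal{H}_{v_0}$ and $\mathcal{H}_{v_1}$, and the whole computation therefore reduces to (i) identifying the restrictions of each $M(i,r)$ to $\mathcal{H}_{v_0}$, $\mathcal{H}_{v_1}$, and $\mathcal{H}_E$, and then (ii) computing the relevant $\ho$ and $\Ext^1$ groups between these restrictions in the three finite-dimensional subalgebras.

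First I would use the parametrisation \eqref{hecke_character} of characters of $\mathcal{H}$ together with the presentation of $\mathcal{H}_{v_i}$ recalled just above the lemma to write down the restrictions explicitly. For $0<r<(p-1)/2$ we have $M(0,r)=\chi_{r,\emptyset}$ and $M(1,r)=\chi_{p-1-r,\emptyset}$, so both restrict to characters of $\mathcal{H}_{v_i}$ on which $T_{s_i}$ acts by zero and $T_t$ acts by $t^{r}$ or $t^{p-1-r}$ respectively, and similarly for $\mathcal{H}_E$. The exceptional feature occurs exactly at $r=(p-1)/2$, where the two $\Omega$-characters $x\mapsto x^{r}$ and $x\mapsto x^{p-1-r}$ coincide, so that the restrictions of $M(0,r)$ and $M(1,r)$ to each vertex algebra (and to $\mathcal{H}_E$) become equal as one-dimensional modules.

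Next I would compute $\ho$ and $\Ext^1$ in each $\mathcal{H}_{v_i}\cong \End_{\mathrm{SL}_2(\mathbb{F}_p)}(\ind_{N(\mathbb{F}_p)}^{\mathrm{SL}_2(\mathbb{F}_p)}\id)$. This is a small algebra with explicit generators $T_t,T_{s_i}$ and the quadratic relation $T_{s_i}^2=-e_1 T_{s_i}$, so a short projective resolution of each character can be written down by hand and every Hom and $\Ext^1$ read off directly; the $\ho_{\mathcal{H}_E}$ term is trivial since $\mathcal{H}_E$ is commutative and semisimple. Plugging these into \eqref{ext_two_points} and performing an Euler-characteristic count case by case in $(i,r)$ versus $(j,s)$ then yields $\dim \Ext^1_{\mathcal{H}}$ and hence the lemma; the off-block case $r\neq s$ (with both indices in $\{0,\dots,(p-1)/2\}$) is already covered by \eqref{distinct_blocks}.

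The main obstacle I expect is the bookkeeping at $r=(p-1)/2$: once the restrictions of $M(0,r)$ and $M(1,r)$ to the vertex algebras coincide, extra copies of $\ho$ survive the connecting map $\delta$ in \eqref{ext_two_points} and cause $\Ext^1_{\mathcal{H}}$ to jump. Pinning down the image of $\delta$ precisely in this case, equivalently identifying which vertex-level self-extensions actually glue to an extension over the full algebra $\mathcal{H}$, is the delicate point; the generic cases $0<r<(p-1)/2$ fall out of the same dimension count without this subtlety.
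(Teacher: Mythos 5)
Your overall strategy is exactly the paper's: feed the characters into the six-term sequence \eqref{ext_two_points} coming from the Schneider--Ollivier resolution, use semisimplicity of $\mathcal{H}_E$ to kill $\Ext^1_{\mathcal{H}_E}$, and reduce everything to $\ho$ and $\Ext^1$ computations over $\mathcal{H}_{v_0}$, $\mathcal{H}_{v_1}$ and $\ho$ over $\mathcal{H}_E$. However, there is one concretely wrong step. You assert that ``the $\ho_{\mathcal{H}_E}$ term is trivial since $\mathcal{H}_E$ is commutative and semisimple.'' Semisimplicity kills higher $\Ext$, not $\ho$: the restriction of $\chi_{r,\emptyset}$ to $\mathcal{H}_E$ is just the $\Omega$-character $t\mapsto t^r$, so $\ho_{\mathcal{H}_E}(\mathfrak{m},\mathfrak{n})$ is one-dimensional whenever the two $\Omega$-characters agree --- in particular for every self-extension $i=j$, and for \emph{all} pairs when $r=(p-1)/2$ (where $t^r=t^{p-1-r}$). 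This term is not a nuisance you can discard; it is needed for the count. With your claim the sequence would begin $0\to \ho_{\mathcal{H}}\to\bigoplus\ho_{\mathcal{H}_{v_i}}\to 0$ in the case $i=j$, i.e.\ $1=2$, an immediate contradiction with exactness. With the correct value the alternating-sum count you propose does close up: $1-2+1+0=0$ for $i=j$, $0<r<(p-1)/2$; $0-0+0+2=2$ for $i\neq j$; and $1-2+1+2=2$ at $r=(p-1)/2$. Note also that once all five terms are computed, exactness of the six-term sequence determines $\dim\Ext^1_{\mathcal{H}}$ outright, so the ``delicate point'' you anticipate about pinning down the image of $\delta$ at $r=(p-1)/2$ does not actually arise; the jump there comes from the vertex $\Ext^1$'s becoming nonzero, not from $\ho$ terms surviving $\delta$.

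The second issue is that the heart of the lemma --- the computation of $\Ext^1_{\mathcal{H}_{v_i}}$ between two characters --- is only promised (``a short projective resolution \dots can be written down by hand''), not delivered. The precise fact needed is that $\Ext^1_{\mathcal{H}_{v_i}}(\chi_{s,\emptyset},\chi_{r,\emptyset})$ is one-dimensional exactly when $r+s=p-1$ and vanishes otherwise. The paper gets this by exhibiting the explicit two-dimensional module $E_c$ (with $T_{s_i}e_0=0$, $T_{s_i}e_1=ce_0$) and then showing, via an $\Omega$-equivariant splitting, that any nonsplit extension forces $T_{s_i}$ to map the complement onto $V$, whence $r+s=p-1$ and $E\simeq E_c$. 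Your resolution approach would certainly also produce this, but until that number is on the table the Euler-characteristic count has nothing to evaluate. So: right framework, but fix the $\ho_{\mathcal{H}_E}$ term and actually carry out the vertex computation.
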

\begin{proof}
  Since $r\neq 0$ the characters $M(0,r)$ and $M(1,r)$ are isomorphic
  to $\chi_{r, \emptyset}$ and $\chi_{p-1-r, \emptyset}$ respectively
  (see \eqref{hecke_character}).  Let $E_c$ be a $2$-dimensional
  $\bar{\mathbb{F}}_p$ module
  $\bar{\mathbb{F}}_pe_1\oplus \bar{\mathbb{F}}_pe_2$ and
  $\bar{\mathbb{F}}_p[\Omega]$ acts on $E$ by $T_te_0=t^re_0$ and
  $T_te_1=t^{p-1-r}e_1$.  We set $T_{s_i}e_0=0$ and $T_{s_i}e_1=ce_0$
  for some $c\neq 0$. This makes $E$ a $\mathcal{H}_{v_i}$ module and
  is a non-trivial extension
  $$0\rightarrow\chi_{r, \emptyset}\rightarrow E\rightarrow \chi_{p-1-r,
    \emptyset}\rightarrow 0.$$ Let $E$ be a
  $\mathcal{H}_{v_i}$-extension of $W:=\chi_{s, \emptyset}$ by
  $V:=\chi_{r, \emptyset}$ i.e, we have an exact sequence
  $$0\rightarrow V\rightarrow E\xrightarrow{f} W \rightarrow0.$$
  There exists a $\bar{\mathbb{F}}_p[\Omega]$-equivariant section
  $s:W\rightarrow E$ of $f$. Let $V'$ be the image of this section.
  Now $E=V\oplus V'$. The action of $T_{s_i}$ is trivial on $V$ and
  observe that $f(T_{s_i}(V'))=T_{s_i}(f(V'))=0$. {\bf If $E$ is
    nontrivial then $T_{s_i}(V')=V$}. This implies that $r+s=p-1$ and
  hence $E$ is isomorphic to $E_c$ for some $c\neq 0$. This shows that
  the space of $\mathcal{H}_{v_i}$ extensions of $W$ by $V$ is one
  dimensional if $r+s=p-1$ and zero otherwise.  Now consider the exact
  sequence \eqref{ext_two_points} when $\mathfrak{m}$ is $M(i,r)$ and
  $\mathfrak{n}$ is $M(j,r)$.  For $i=j$ the map $\delta$ in zero
  \eqref{ext_two_points} hence the space
  $\Ext^1_{\mathcal{H}}(M(i,r), M(i,r))$ is trivial. When $i\neq j$
  the $\ho$ spaces in \eqref{ext_two_points} are all trivial. This
  shows that the dimension of the space
  $\Ext^1_{\mathcal{H}}(M(i,r), M(i,r))$ is $2$ from our calculations.
 \end{proof}
\begin{lemma}\label{iwahori_hecke_ext}
  The space of extensions $\Ext^1_{\mathcal{H}}(M(i,0), M(j,0))$ is
  trivial for $i=j$ and has dimension $1$ for $i\neq j$.
\end{lemma}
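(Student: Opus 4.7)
The plan is to apply the five-term exact sequence \eqref{ext_two_points} with $\mathfrak{m} = M(i,0)$ and $\mathfrak{n} = M(j,0)$, and to evaluate each of its terms. Both characters $M(0,0) = \chi_{\id, J}$ and $M(1,0) = \chi_{\id, I}$ have trivial $\Omega$-part, so they restrict to the same character of $\mathcal{H}_E = \bar{\mathbb{F}}_p[\Omega]$, yielding $\ho_{\mathcal{H}_E}(M(i,0), M(j,0)) = \bar{\mathbb{F}}_p$ in all cases. Restricted to $\mathcal{H}_{v_k}$, however, the characters differ whenever $i \neq j$ (one has $T_{s_k}$ acting by zero, the other by the nonzero scalar determined by the quadratic relation on the trivial $\Omega$-part). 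Consequently $\ho_{\mathcal{H}}(M(i,0), M(j,0))$ and $\ho_{\mathcal{H}_{v_k}}(M(i,0), M(j,0))$ are each $\bar{\mathbb{F}}_p$ when $i = j$ and $0$ when $i \neq j$.

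The key step is the vanishing $\Ext^1_{\mathcal{H}_{v_k}}(M(i,0), M(j,0)) = 0$ for all $i, j, k$. Mimicking the argument in Lemma \ref{regular_hecke_ext}, given an extension $0 \to V \to E \to W \to 0$ I pick an $\Omega$-equivariant splitting $E = V \oplus V'$ (available since $|\Omega| = p - 1$ is coprime to $p$) and parametrise the action by $T_{s_k}(v') = c_W v' + a v$, where $c_V, c_W$ denote the scalars by which $T_{s_k}$ acts on $V, W$. The relation $T_{s_k}^2 = -e_1 T_{s_k}$ becomes a linear constraint on $a$, while a change of lift $v' \mapsto v' + c v$ shifts $a$ by $c(c_V - c_W)$. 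Since $V$ and $V'$ both have trivial $\Omega$-character, the gauge parameter $c$ is unconstrained by $\Omega$-equivariance --- this is the key difference with Lemma \ref{regular_hecke_ext}. A short case analysis on $(c_V, c_W)$ shows that either the quadratic relation forces $a = 0$ (when $c_V = c_W$) or the gauge freedom absorbs $a$ (when $c_V \neq c_W$), so the local $\Ext^1$ always vanishes.

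With this vanishing in hand, the exact sequence \eqref{ext_two_points} reduces substantially. For $i = j$ it reads
\[
0 \to \bar{\mathbb{F}}_p \to \bar{\mathbb{F}}_p^{\oplus 2} \to \bar{\mathbb{F}}_p \to \Ext^1_{\mathcal{H}}(M(i,0), M(i,0)) \to 0,
\]
where the middle map is the difference of the two restrictions and is surjective, forcing $\Ext^1_{\mathcal{H}}(M(i,0), M(i,0)) = 0$. For $i \neq j$ all the Hom terms on the left vanish except $\ho_{\mathcal{H}_E} = \bar{\mathbb{F}}_p$, giving $\Ext^1_{\mathcal{H}}(M(i,0), M(j,0)) \cong \bar{\mathbb{F}}_p$.

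The main obstacle is the vanishing of local $\Ext^1_{\mathcal{H}_{v_k}}$ in the asymmetric case $c_V \neq c_W$: here the quadratic relation by itself is consistent with a one-parameter family of non-split module structures, and the vanishing must come from the gauge freedom of equivalence of extensions. Since $V$ and $V'$ both have trivial $\Omega$-character in the $r = 0$ setting, this gauge freedom is genuinely nontrivial, in contrast to the $r > 0$ setting of Lemma \ref{regular_hecke_ext}, where the distinct $\Omega$-characters of $V$ and $V'$ force the change of lift to be trivial.
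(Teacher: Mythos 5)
Your proof is correct and follows essentially the same route as the paper: both apply the sequence \eqref{ext_two_points} and reduce everything to the vanishing of $\Ext^1_{\mathcal{H}_{v_k}}$ between characters with trivial $\Omega$-part. The only difference is in how that local vanishing is justified --- the paper invokes the semisimplicity of the block $e_1\mathcal{H}_{v_k}\cong\bar{\mathbb{F}}_p\times\bar{\mathbb{F}}_p$, while you verify the same fact by an explicit (and correct) computation with the quadratic relation and the gauge freedom of the splitting.
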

\begin{proof}
  The algebra $e_1\mathcal{H}_{v_i}$ is semi-simple algebra and hence
  we get that
  \begin{equation}\label{iwahori_trivial_ext}
\Ext^i_{\mathcal{H}_{v_i}}(\chi_{\id, S}, \chi_{\id, S'})=0
\end{equation}
for all $i>0$ and for subsets $S$ and $S'$ of $\{s_0, s_1\}$.  Now
consider the exact sequence \eqref{ext_two_points} when $\mathfrak{m}$
is $M(i,r)$ and $\mathfrak{n}$ is $M(j,r)$.  For $i=j$ the map
$\delta$ in \eqref{ext_two_points} is zero hence the space
$\Ext^1_{\mathcal{H}}(M(i,r), M(i,r))$ is trivial. When
$i\neq j$ the first two $\ho$ spaces in \eqref{ext_two_points} are
trivial. The space $\ho_{\mathcal{H}_E}(\mathfrak{m}, \mathfrak{n})$
has dimension one. This shows that the dimension of the space
$\Ext^1_{\mathcal{H}}(M(i,r), M(i,r))$ is $1$ for $i\neq j$.
\end{proof}
\section{The Hecke module
  \texorpdfstring{$\mathbb{R}^1\mathcal{I}(\pi_{i,r})$}{}.}
Pa\v{s}k\={u}nas calculated the cohomology groups
$\mathbb{R}^1\mathcal{I}(\pi_{i,r})$ and we now recall his
results. Let $\tilde{\pi}_r$ be the supersingular representation
$\pi(\sigma_r, \mu_1)$ of $G$. Recall that the $K$-socle of
$\tilde{\pi}_r$ is isomorphic to $\sigma_r\oplus \sigma_{p-1-r}$ and
the space of $I(1)$ invariants has a basis $(\bv_0, \bv_1)$
where $\bv_{0}$ and $\bv_{1}$ belong to $\sigma_r^{N_p}$ and
$\sigma_{p-1-r}^{N_p}$ respectively. Let $I^+$ and $I^-$ be the groups
$I\cap U$ and $I\cap \bar{U}$ respectively. Consider the spaces
\begin{align*}
  M_0:=<I^{+}\theta^n\bv_1; \ n\geq 0>\ \text{and}\
  M_1:=<I^{+}\theta^n\bv_2; \ n\geq 0>
\end{align*}  
and let $\Pi$ be the matrix $\begin{pmatrix}0&1\\p&0\end{pmatrix}$
which normalizes $I$ and $I(1)$. We denote by $\pi_0$ and $\pi_1$ the
spaces $M_0+\Pi M_1$ and $M_1+\Pi M_0$.  Let $G^{0}$ be subgroup of
$G$ consisting of elements with integral discriminant. Let
$G^{+}$ be the group $ZG^0$.  We denote by $Z_1$ the group
$I(1)\cap Z$.
\begin{proposition}[Pa\v{s}k\={u}nas]\label{paskunas_presentation}
  The spaces $\sigma_0$ and $\sigma_1$ are $G^{+}$ stable. The space
  $\tilde{\pi}_r$ is the direct sum of the representations $\pi_1$ and
  $\pi_0$ as $G^{+}$ representations and hence $\pi_{i,r}$ is
  isomorphic to $\pi_i$ as $G_S$ representations for $i\in\{0,1\}$. If
  $r$ be an integer such that $0<r< (p-1)/2$ then the Hecke module
  $\mathbb{R}^1\mathcal{I}(\pi_{i,r})$ is isomorphic to
  $\mathcal{I}(\pi_{i,r})\oplus\mathcal{I}(\pi_{i,r})$. In the Iwahori
  case (i.e, $r=0$) the Hecke module
  $\mathbb{R}\mathcal{I}(\pi_{0,0})\oplus\mathbb{R}\mathcal{I}(\pi_{1,0})$
  is isomorphic to
  $\mathcal{I}(\pi_{0,0})^{\oplus 2}\oplus
  \mathcal{I}(\pi_{0,0})^{\oplus 2}$.
\end{proposition}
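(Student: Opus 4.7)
The plan is to establish the proposition in three stages: first the $G^+$-decomposition $\tilde{\pi}_r=\pi_0\oplus\pi_1$, second the identification of the summands with the $G_S$-representations $\pi_{i,r}$, and third the cohomological statement by transfer from Pa\v{s}k\={u}nas's computation for $G$.

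For the first stage, I would verify $G^+$-stability of $\pi_0$ and $\pi_1$ on generators. The group $G^+=ZG^0$ is generated modulo $Z$ by the Iwahori $I$ together with the element $\theta$, so it suffices to check stability of each $\pi_i$ under $I^+$, $I^-$, the finite torus $T_S^0$, and $\theta$. By construction the spaces $M_i$ are $I^+$-, $T_S^0$- and $\theta^{\mathbb{Z}_{\geq 0}}$-stable; their $I^-$-stability follows from the tree/diagram presentation of $\tilde{\pi}_r$ underlying the Barthel--Livn\'e description of the supersingular representations. The exchange relations $\Pi I^\pm\Pi^{-1}=I^\mp$ and $\Pi\theta\Pi^{-1}=\theta^{-1}$, together with $\Pi^2=p\cdot\id\in Z$ acting trivially, propagate these stabilities across the factor $\Pi M_{1-i}$ inside $\pi_i=M_i+\Pi M_{1-i}$, yielding $G^+$-stability.

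For the second stage, $G_S\subset G^+$ since elements of $G_S$ have trivial determinant, so each $\pi_i$ is a $G_S$-subrepresentation of $\tilde{\pi}_r$ containing $\bv_i$; since $\pi_{i,r}$ is by definition the $G_S$-subrepresentation generated by $\bv_i$, one has $\pi_{i,r}\subset \pi_i$. The element $\Pi\notin G^+$ swaps $\pi_0$ and $\pi_1$, so $\pi_0+\pi_1$ is $G$-stable; as it contains the generating vectors $\bv_0,\bv_1$ of $\tilde{\pi}_r$ as a $G$-module, $\tilde{\pi}_r=\pi_0+\pi_1$. The directness of the sum follows from the disjointness of the $K$-socles $\sigma_r$ and $\sigma_{p-1-r}$ appearing in $\pi_0$ and $\pi_1$ respectively, and irreducibility of $\pi_{i,r}$ then forces $\pi_i=\pi_{i,r}$.

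For the third stage, the essential input is Pa\v{s}k\={u}nas's identification \cite{paskunas_extensions} of $H^1(I(1),\tilde{\pi}_r)$ with two copies of $\tilde{\pi}_r^{I(1)}$ as a module over the pro-$p$ Iwahori--Hecke algebra of $G$. Exploiting the direct product decomposition $I(1)=I(1)_S\times Z_1$ (valid for odd $p$ because squaring is bijective on $1+p\mathbb{Z}_p$) and the triviality of the $Z_1$-action on $\tilde{\pi}_r$, the Künneth formula relates $H^1(I(1),\tilde{\pi}_r)$ to $H^1(I(1)_S,\tilde{\pi}_r)\oplus \bigl(\tilde{\pi}_r^{I(1)_S}\otimes H^1(Z_1,\bar{\mathbb{F}}_p)\bigr)$; splitting along the $G_S$-decomposition $\tilde{\pi}_r|_{G_S}=\pi_{0,r}\oplus\pi_{1,r}$ from the previous stage then yields $\mathbb{R}^1\mathcal{I}(\pi_{i,r})\simeq \mathcal{I}(\pi_{i,r})^{\oplus 2}$ for $0<r<(p-1)/2$. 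The Iwahori case $r=0$ is treated in parallel, using the idempotent block decomposition \eqref{central_idempotents} to separate the contributions of $\pi_{0,0}$ and $\pi_{1,0}$. The main obstacle will be the compatibility of these splittings with the Hecke actions: one must check that Pa\v{s}k\={u}nas's identification is equivariant under the restriction from the pro-$p$ Iwahori--Hecke algebra of $G$ to $\mathcal{H}$ and respects the $G^+$-decomposition, which likely requires tracing his explicit cohomology classes and verifying their behaviour under $T_{s_0}$ and $T_{s_1}$.
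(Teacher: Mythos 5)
The paper's own ``proof'' of this proposition is a pure citation to Pa\v{s}k\={u}nas (Corollary 6.5 for the $G^{+}$-decomposition; Proposition 10.5, Theorem 10.7 and equation (49) for the cohomology), so you are attempting to reconstruct an argument the paper does not actually give. Your first two stages are a reasonable outline of what lies behind Corollary 6.5 (though ``$I^-$-stability follows from the tree/diagram presentation'' is precisely the nontrivial verification there, not a one-liner). The real problem is your third stage. From Pa\v{s}k\={u}nas's computation of $H^1(I(1)/Z_1,\tilde{\pi}_r)$ as a Hecke module you can only read off the \emph{total} module $\mathbb{R}^1\mathcal{I}(\pi_{0,r})\oplus\mathbb{R}^1\mathcal{I}(\pi_{1,r})\simeq M(0,r)^{\oplus 2}\oplus M(1,r)^{\oplus 2}$. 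The proposition asserts something strictly finer for $0<r<(p-1)/2$: that both copies of $M(i,r)=\mathcal{I}(\pi_{i,r})$ land in the summand $\mathbb{R}^1\mathcal{I}(\pi_{i,r})$ and none in the other. A priori the four characters could be distributed as $M(0,r)\oplus M(1,r)$ in each summand, and this alternative is not a phantom: it is exactly what happens in the Iwahori case, where the paper's final corollary shows $\mathbb{R}^1\mathcal{I}(\pi_{i,0})\simeq\mathcal{I}(\pi_{i,0})\oplus\mathcal{I}(\pi_{j,0})$ with $i\neq j$, and where the proposition deliberately asserts only the direct sum of the two $\mathbb{R}^1\mathcal{I}$'s because the finer splitting cannot be obtained at this stage (the paper derives it only at the very end, from the main theorem). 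You flag this distribution problem as ``the main obstacle'' and defer it, but it is the entire mathematical content of the statement; moreover your plan to treat $r=0$ ``in parallel'' would, if it ran the same way as $r>0$, produce the wrong splitting. So the proposal has a genuine gap: the step that pins down each $\mathbb{R}^1\mathcal{I}(\pi_{i,r})$ individually (i.e.\ tracing Pa\v{s}k\={u}nas's explicit cocycles through the decomposition $\tilde{\pi}_r=\pi_0\oplus\pi_1$ and computing the $T_{s_0}$-, $T_{s_1}$-actions on them) is missing, and it cannot be waved through.

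Two smaller points. First, Pa\v{s}k\={u}nas works in the category with fixed central character, so the group whose cohomology he computes is $I(1)/Z_1\cong I(1)_S$; your K\"unneth decomposition of $H^1(I(1),\tilde{\pi}_r)$ introduces an extra summand $\tilde{\pi}_r^{I(1)_S}\otimes H^1(Z_1,\bar{\mathbb{F}}_p)$ that should not enter if you quote the correct cohomology group, so be careful which of $H^1(I(1),-)$ and $H^1(I(1)/Z_1,-)$ your ``two copies of $\tilde{\pi}_r^{I(1)}$'' refers to. Second, in your stage-two argument the directness of $\pi_0+\pi_1$ needs that $\pi_1$ does not contain $\sigma_r$ in its $K$-socle (and symmetrically), which requires knowing the $K$-socle of each $\pi_i$ exactly, not merely that each contains one constituent of ${\rm Soc}_K(\tilde{\pi}_r)$.
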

\begin{proof}
  The first part follows from \cite[Corollary
  6.5]{paskunas_extensions}. The second part follows from
  \cite[Proposition 10.5, Theorem 10.7 and equation
  (49)]{paskunas_extensions}.
\end{proof}
\begin{corollary}
  Let $\tau$ be an irreducible supersingular representation of
  $G_S$. If the space of extensions $\Ext^1_{G_S}(\tau, \pi_{i,r})$ is
  non-trivial then $\tau\simeq \pi_{j,r}$ for some $j\in \{0,1\}$.
\end{corollary}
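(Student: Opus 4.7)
The plan is to apply the five-term exact sequence \eqref{fiveterm} to the pair $(\tau, \pi_{i,r})$ and show that its middle term vanishes whenever $\tau$ is not of the required form. Every irreducible supersingular representation of $G_S$ is isomorphic to $\pi_{k,s}$ for some $k\in\{0,1\}$ and $0\le s\le (p-1)/2$, so I may assume $\tau=\pi_{k,s}$ and reduce to showing $\Ext^1_{G_S}(\pi_{k,s},\pi_{i,r})=0$ whenever $s\neq r$.

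The leftmost term of \eqref{fiveterm} is $\Ext^1_{\mathcal H}(M(k,s), M(i,r))$, which vanishes by \eqref{distinct_blocks}: for distinct $s,r\in[0,(p-1)/2]$ the $W_0$-orbits of the $\Omega$-characters underlying $M(k,s)$ and $M(i,r)$ are disjoint, so the two modules sit in different blocks of $\mathcal H$ under the central idempotent decomposition \eqref{central_idempotents}. This block separation in fact yields $\Ext^j_{\mathcal H}(M(k,s), N) = 0$ for all $j\ge 0$ whenever $N$ is an $\mathcal H$-module supported in the $M(i,r)$-block.

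For the third term $\ho_{\mathcal H}(M(k,s), \mathbb R^1\mathcal I(\pi_{i,r}))$ I would invoke Proposition \ref{paskunas_presentation}, which identifies $\mathbb R^1\mathcal I(\pi_{i,r})$ with $M(i,r)^{\oplus 2}$ for $0\le r<(p-1)/2$; in particular it is supported entirely in the block of $M(i,r)$, and so the Hom from the cross-block character $M(k,s)$ is zero. The five-term sequence then forces $\Ext^1_{G_S}(\pi_{k,s},\pi_{i,r})=0$, as desired.

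The main obstacle is the case $r=(p-1)/2$, which Proposition \ref{paskunas_presentation} does not treat explicitly. Here I would argue directly that $\mathbb R^1\mathcal I(\pi_{0,(p-1)/2})$ still lies in the $M(0,(p-1)/2)$-block of $\mathcal H$: the character $x\mapsto x^{(p-1)/2}$ is fixed by $W_0$, so the block of $\chi_{(p-1)/2,\emptyset}$ is a standalone block of $\mathcal H$; since $\pi_{0,(p-1)/2}$ has $I(1)_S$-invariants equal to this single character, it lies entirely in one block of $\Rep_{G_S}^{I(1)_S}$, and an injective resolution within this block keeps $\mathbb R^j\mathcal I$ supported in the corresponding block of $\Mod_{\mathcal H}$. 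The Hom-vanishing argument then applies verbatim, and the whole proof of the corollary is uniform across all $r\in[0,(p-1)/2]$.
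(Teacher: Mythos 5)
Your overall route is the same as the paper's: the corollary amounts to the vanishing of the first and third terms of the five-term sequence \eqref{fiveterm} when $\tau\simeq\pi_{k,s}$ with $s\neq r$, the first by the central idempotent decomposition \eqref{central_idempotents} (i.e.\ \eqref{distinct_blocks}), and the third because Proposition \ref{paskunas_presentation} identifies $\mathbb{R}^1\mathcal{I}(\pi_{i,r})$ with a direct sum of copies of $\mathcal{I}(\pi_{i,r})$, so it receives no nonzero $\mathcal{H}$-map from the cross-block character $M(k,s)$. For $0\le r<(p-1)/2$ your argument is complete and is exactly what the paper's one-line proof intends.

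The one place where you depart from the paper is the case $r=(p-1)/2$, and there your substitute argument has a genuine flaw. The claim that ``an injective resolution within this block keeps $\mathbb{R}^j\mathcal{I}$ supported in the corresponding block of $\Mod_{\mathcal{H}}$'' is not a formal consequence of the block decomposition of $\mathcal{H}$: the derived functor $\mathbb{R}^1\mathcal{I}(\pi)$ is the cohomology $H^1(I(1)_S,\pi)$, on which $\Omega$ acts by conjugation, and since $\Omega$ acts nontrivially on $H^1(I(1)_S,\bar{\mathbb{F}}_p)$ (through the characters $t\mapsto t^{\pm 2}$ coming from the Frattini quotient of $I(1)_S$), the $\Omega$-eigencharacters occurring in $H^1(I(1)_S,\pi)$ are a priori twists of those occurring in $\pi^{I(1)_S}$, and the block can move. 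So block-preservation of $\mathbb{R}^1\mathcal{I}$ is not free; it is precisely the content of Pa\v{s}k\={u}nas's explicit computation (the results cited in the proof of Proposition \ref{paskunas_presentation}), which does in fact extend to $r=(p-1)/2$, and citing that computation is how the case should be closed. To your credit, you have correctly noticed that Proposition \ref{paskunas_presentation} as stated omits $r=(p-1)/2$ while the corollary is asserted for all $r$ --- a point the paper itself glosses over --- but the repair you propose rests on a general principle that is false.
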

\begin{proof}
  This follows from \eqref{fiveterm}, \eqref{distinct_blocks} and
  Proposition \ref{paskunas_presentation}.
\end{proof}
\begin{corollary}\label{cross_extensions}
  Let $0<r<(p-1)/2$ and $i\neq j$ then the dimensions of the space
  $\Ext_{G_S}^1(\pi_{i,r}, \pi_{j,r})$ is $2$. 
\end{corollary}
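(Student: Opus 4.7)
The plan is to substitute $\tau=\pi_{i,r}$ and $\sigma=\pi_{j,r}$ into the five-term exact sequence \eqref{fiveterm} and then use Proposition \ref{paskunas_presentation} together with Lemma \ref{regular_hecke_ext} to identify $\Ext^1_{G_S}(\pi_{i,r},\pi_{j,r})$ with a Hecke-algebra extension group that has already been computed.

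First I would observe that for $0<r<(p-1)/2$ we have $r\neq p-1-r$, so the characters $M(0,r)=\chi_{r,\emptyset}$ and $M(1,r)=\chi_{p-1-r,\emptyset}$ are non-isomorphic simple $\mathcal{H}$-modules; in particular $\ho_{\mathcal{H}}(M(i,r),M(j,r))=0$ whenever $i\neq j$. Next, by Proposition \ref{paskunas_presentation} we have $\mathbb{R}^1\mathcal{I}(\pi_{j,r})\simeq M(j,r)^{\oplus 2}$, so the third term in \eqref{fiveterm},
$$\ho_{\mathcal{H}}\bigl(M(i,r),\, \mathbb{R}^1\mathcal{I}(\pi_{j,r})\bigr)\simeq \ho_{\mathcal{H}}(M(i,r),M(j,r))^{\oplus 2},$$
vanishes. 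The five-term sequence therefore collapses to an isomorphism
$$\Ext^1_{G_S}(\pi_{i,r},\pi_{j,r})\simeq \Ext^1_{\mathcal{H}}(M(i,r),M(j,r)),$$
and the right-hand side has dimension $2$ by Lemma \ref{regular_hecke_ext}.

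There is essentially no obstacle here: the real work was done in establishing Lemma \ref{regular_hecke_ext} and Proposition \ref{paskunas_presentation}, and the corollary reduces to plugging those inputs into \eqref{fiveterm}. The only point worth double-checking is that the hypothesis $\tau=\langle G_S\tau^{I(1)_S}\rangle$ required to apply \eqref{fiveterm} is satisfied, which is clear because $\pi_{i,r}$ is by definition generated as a $G_S$-representation by $\sigma_r^{I(1)}$ or $\sigma_{p-1-r}^{I(1)}$.
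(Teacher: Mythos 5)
Your proof is correct and follows essentially the same route as the paper: substitute into the five-term sequence \eqref{fiveterm}, use that $M(i,r)\not\simeq M(j,r)$ together with Proposition \ref{paskunas_presentation} to kill the $\ho_{\mathcal{H}}(\mathcal{I}(\tau),\mathbb{R}^1\mathcal{I}(\sigma))$ term, and conclude via Lemma \ref{regular_hecke_ext}. Your explicit check that $\pi_{i,r}$ is generated by its $I(1)_S$-invariants is a small point the paper leaves implicit.
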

\begin{proof}
  Observe that for $0<r<(p-1)/2$ the modules $M(i,r)$ and $M(j,s)$ are
  not isomorphic. Now using the exact sequence \eqref{fiveterm} and
  Proposition \ref{paskunas_presentation} we get that
$$\Ext^1_{G_S}(\pi_{i,r}, \pi_{j,r})
\simeq\Ext^1_{\mathcal{H}}(M(i,r), M(j,r)).$$ The corollary follows
from the Lemma \ref{regular_hecke_ext}.
\end{proof}
\begin{remark}
  The results of Corollary \ref{cross_extensions} remain valid for
  $r=0$ but we prove this later. It is interesting to note that for
  $0<r<(p-1)/2$ and $i\neq j$ any extension $E$ of $\pi_{i,r}$ by
  $\pi_{j,r}$ for $i\neq j$ is generated by its $I(1)_S$ invariants,
  i.e, $E=<G_SE^{I(1)_S}>$. 
\end{remark}
\section{Calculation of degree one self extensions.}
Let us first consider the case when $0<r\leq(p-1)/2$. In order to
determine the dimensions of $\Ext^1(\pi_{i,r}, \pi_{i,r})$ we first
show that the map
\begin{equation}\label{delta_map}
  \Ext^1_{G_S}(\pi_{i, r}, \pi_{i,r})\rightarrow
  \ho_{\mathcal{H}}(\mathcal{I}(\pi_{i,r}),
  \mathbb{R}^1\mathcal{I}(\pi_{i,r})))
\end{equation} is non-zero. Explicitly the above map takes 
an extension $E$, with $0\rightarrow \pi_{i, r}\rightarrow E\rightarrow \pi_{i,
  r}\rightarrow 0$, to the delta map in the associated long exact
sequence, given by:
$\mathcal{I}(\pi_{i,
  r})\xrightarrow{\delta_E}\mathbb{R}^1\mathcal{I}(\pi_{i, r})$. 
Note that the dimension of $E^{I(1)}$ is one if and only if
$\delta_E\neq 0$. 
\begin{lemma}
  For $0<r\leq(p-1)/2$ then map \eqref{delta_map} is non-zero.
\end{lemma}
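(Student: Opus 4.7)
The plan is to exhibit, for each $0<r\le (p-1)/2$ and each $i\in\{0,1\}$, a non-split self-extension $E$ of $\pi_{i,r}$ whose connecting homomorphism $\delta_E$ is non-zero, equivalently, by the remark just above the lemma, one with $\dim_{\bar{\mathbb{F}}_p}E^{I(1)_S}=1$. Since \eqref{delta_map} sends $[E]$ to $\delta_E$, this is precisely what is required.

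For $0<r<(p-1)/2$ the reduction is immediate: Lemma \ref{regular_hecke_ext} gives $\Ext^1_{\mathcal{H}}(M(i,r),M(i,r))=0$, so by the five-term sequence \eqref{fiveterm} the map \eqref{delta_map} is already injective, and any non-trivial self-extension of $\pi_{i,r}$ suffices. To produce one I would lift to $G=\mathrm{GL}_2(\mathbb{Q}_p)$. By Proposition \ref{paskunas_presentation}, $\tilde{\pi}_r|_{G_SZ}=\pi_{0,r}\oplus\pi_{1,r}$; combined with $\tilde{\pi}_r\simeq\ind_{G_SZ}^G\pi_{0,r}$, Frobenius reciprocity yields
\[
\Ext^1_G(\tilde{\pi}_r,\tilde{\pi}_r)\;\simeq\;\Ext^1_{G_SZ}(\pi_{0,r},\pi_{0,r})\,\oplus\,\Ext^1_{G_SZ}(\pi_{0,r},\pi_{1,r}),
\]
and a standard central-character argument (using $p>2$ and that $Z\cap G_S=\{\pm 1\}$ acts trivially) identifies each $\Ext^1_{G_SZ}$ with the corresponding $\Ext^1_{G_S}$. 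Combining Pa\v{s}k\={u}nas's dimension count $\dim\Ext^1_G(\tilde{\pi}_r,\tilde{\pi}_r)=3$ from \cite[Theorem 10.7]{paskunas_extensions} with $\dim\Ext^1_{G_S}(\pi_{0,r},\pi_{1,r})=2$ from Corollary \ref{cross_extensions} then forces $\dim\Ext^1_{G_S}(\pi_{0,r},\pi_{0,r})\ge 1$. The case $i=1$ follows by conjugating the produced extension with $\Pi$.

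The case $r=(p-1)/2$ is more delicate: here $\pi_{0,r}\simeq\pi_{1,r}$, so the Frobenius identity collapses to $\dim\Ext^1_G(\tilde{\pi}_r,\tilde{\pi}_r)=2\dim\Ext^1_{G_S}(\pi_{0,r},\pi_{0,r})$, while $\dim\Ext^1_{\mathcal{H}}(M(0,r),M(0,r))=2$ by Lemma \ref{regular_hecke_ext}. Producing a non-trivial self-extension no longer suffices; one needs the strict inequality $\dim\Ext^1_{G_S}(\pi_{0,r},\pi_{0,r})>2$, which again follows from Pa\v{s}k\={u}nas's $\Ext^1_G$ formula at this boundary parameter. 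The main technical obstacle is the Frobenius / central-character comparison between $\Ext^1_G$ and $\Ext^1_{G_S}$, and correctly invoking Pa\v{s}k\={u}nas's numerical $\Ext^1_G$ formulae in each regime; once these are in place the conclusion reduces to a subtraction.
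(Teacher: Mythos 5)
Your observation that for $0<r<(p-1)/2$ the map \eqref{delta_map} is injective (its kernel in the five-term sequence \eqref{fiveterm} is $\Ext^1_{\mathcal{H}}(M(i,r),M(i,r))=0$ by Lemma \ref{regular_hecke_ext}), so that \emph{any} non-split self-extension of $\pi_{i,r}$ would do, is correct and is a genuinely different reduction from the paper's. But the way you produce such an extension does not hold up. The isomorphism $\tilde{\pi}_r\simeq\ind_{G_SZ}^{G}\pi_{0,r}$ is false: $\det(G_SZ)=(\mathbb{Q}_p^{\times})^{2}$ has index $4$ in $\mathbb{Q}_p^{\times}$ for odd $p$, so $[G:G_SZ]=4$ and that induced representation has length at least $4$ on restriction to $G_S$, whereas $\res_{G_S}\tilde{\pi}_r$ has length $2$. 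The relevant index-two subgroup is $G^{+}=ZG^{0}$, with $\pi_{0,r}$ replaced by the $G^{+}$-summand $\pi_{0}$. Even after that repair, Shapiro only gives $\Ext^1_G(\tilde{\pi}_r,\tilde{\pi}_r)\simeq\Ext^1_{G^{+}}(\pi_0,\pi_0)\oplus\Ext^1_{G^{+}}(\pi_0,\pi_1)$, and the comparison with $\Ext^1_{G_S}$ is only a restriction \emph{injection} (subgroup of index prime to $p$, after fixing the central character); it is not an identification, and Pa\v{s}k\={u}nas's count of $3$ is for $\Ext^1$ with fixed central character, which is strictly smaller than the full $\Ext^1_{G_SZ}$ you invoke (the latter picks up contributions from $\Ext^1_{Z}(\zeta,\zeta)\neq0$). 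The inequalities do happen to point the right way, so for $0<r<(p-1)/2$ one can salvage $\dim\Ext^1_{G_S}(\pi_{0,r},\pi_{0,r})\geq1$ along these lines.

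The case $r=(p-1)/2$ is where the argument genuinely fails. There the kernel of \eqref{delta_map} is $\Ext^1_{\mathcal{H}}(M(0,r),M(0,r))$, of dimension $2$, so you need $\dim\Ext^1_{G_S}(\pi_{0,r},\pi_{0,r})\geq3$. Your bookkeeping gives only: two summands adding up to $3$, each injecting into the same space $\Ext^1_{G_S}(\pi_{0,r},\pi_{0,r})$, hence a lower bound of $2$, not $3$; and the appeal to ``Pa\v{s}k\={u}nas's formula at this boundary parameter'' is not an argument --- the $\Ext^1_G$ dimension is still $3$ there, so no subtraction of this kind can force the bound you need. The paper's proof avoids dimension counting altogether: it takes a self-extension $E$ of $\tilde{\pi}_r$ over $G$ with $\delta_E\neq0$ (equivalently $\dim E^{I(1)}=2$), forms the pullback along $\pi_{i,r}\hookrightarrow\tilde{\pi}_r$ and then the pushout along the projection $\tilde{\pi}_r\twoheadrightarrow\pi_{i,r}$, and chases the long exact sequences of $\mathcal{I}$ to see that the resulting self-extension $E_2$ of $\pi_{i,r}$ still has non-zero connecting map. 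That construction is uniform in $0<r\leq(p-1)/2$ and is what you would need in order to recover the boundary case.
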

\begin{proof}
  For $0<r\leq(p-1)/2$ there exists a self extension $E$ of
  $\tilde{\pi}_r$ such that the map
  $\mathcal{I}(\tilde{\pi}_{r})\xrightarrow{\delta_E}
  \mathbb{R}^1\mathcal{I}(\tilde{\pi}_{r})$ is non-zero. We fix an
  extension $E$ such that $\delta_E\neq 0$. Since $\delta_E$ is a
  Hecke-equivariant map and $\mathcal{I}(\tilde{\pi}_r)$ is an
  irreducible Hecke-module of dimension $2$ we get that the inclusion
  map of $\mathcal{I}(\tilde{\pi}_r)$ in $\mathcal{I}(E)$ is an
  isomorphism i.e, $\dim E^{I(1)}=2$. Now consider the pullback diagram
\begin{equation}\label{pullback}
\begin{tikzpicture}[node distance=1.5cm,auto]
\node(A_1){$0$};
\node(B_1)[right of=A_1]{$\tilde{\pi}_r$};
\node(C_1) [right of=B_1] {$E_1$};
\node(D_1) [right of=C_1] {$\pi_{i,r}$};
\node(E_1) [right of=D_1] {$0$};
\node(A_2)[below of =A_1]{$0$};
\node(B_2)[below of=B_1]{$\tilde{\pi}_r$};
\node(C_2) [below of=C_1] {$E$};
\node(D_2) [below of=D_1] {$\tilde{\pi}_r$};
\node(E_2) [right of=D_2] {$0$.};

\draw[->] (A_1) -- (B_1);
\draw[->] (B_1) -- (C_1);
\draw[->](C_1) -- (D_1);
\draw[->](D_1) -- (E_1);
\draw[->](A_2) -- (B_2);
\draw[->](B_2) -- (C_2);
\draw[->] (C_2) -- (D_2);
\draw[->](D_2) -- (E_2);
\draw[->](B_1) -- (B_2);
\draw[->](C_1) -- (C_2);
\draw[->](D_1) -- (D_2);
\end{tikzpicture} 
\end{equation}
The long exact sequences in $I(1)$-group cohomology attached to
\eqref{pullback} gives us:
\begin{center}
$\begin{tikzpicture}[node distance=2cm, auto]
\node(A_1){$0$};
\node(B_1)[right of=A_1]{$\mathcal{I}(\tilde{\pi}_r)$};
\node(C_1) [right of=B_1] {$\mathcal{I}(E_1)$};
\node(D_1) [right of=C_1] {$\mathcal{I}(\pi_{i,r})$};
\node(E_1) [right of=D_1] {$\mathbb{R}^1\mathcal{I}(\tilde{\pi}_r)$};
\node(A_2)[below of =A_1]{$0$};
\node(B_2)[below of=B_1]{$\mathcal{I}(\tilde{\pi}_r)$};
\node(C_2) [below of=C_1] {$\mathcal{I}(E)$};
\node(D_2) [below of=D_1] {$\mathcal{I}(\tilde{\pi}_r)$};
\node(E_2) [right of=D_2] {$\mathbb{R}^1\mathcal{I}(\tilde{\pi}_r)$.};
\draw[->] (A_1) -- (B_1);
\draw[->] (B_1) to node[above]{$f$}(C_1);
\draw[->](C_1) -- (D_1);
\draw[->](D_1) to node[above]{$\delta_2$}(E_1);
\draw[->](A_2) -- (B_2);
\draw[->](B_2) -- (C_2);
\draw[->] (C_2) -- (D_2);
\draw[->](D_2) to node[below]{$\delta_1$}(E_2);
\draw[->](B_1) -- (B_2);
\draw[->](C_1) -- (C_2);
\draw[->](D_1) -- (D_2);
\draw[->](E_1) -- (E_2);
\end{tikzpicture} $
\end{center}
Since the dimension of $\mathcal{I}(E)$ is $2$ we
get that $\delta_1$ is injective and hence the map $\delta_2$
is non-zero. The dimension of the space $\mathcal{I}(\pi_{i,r})$ is
one hence $f$ is an isomorphism. This shows
that the space $\mathcal{I}(E_1)$ has dimension $2$. For $r=(p-1)/2$
the representations $\pi_{1,r}\simeq \pi_{0,r}$. We assume without
loss of generality $\img\delta_2$ is contained in
$\mathbb{R}^1\mathcal{I}(\pi_{i,r})$.  For any $r$ such that
$0<r\leq (p-1)/2$ consider the pushout of $\tilde{\pi}_r$ by
$\pi_{i,r}$
\begin{equation}\label{pushforward}
\begin{tikzpicture}[node distance=1.5cm,auto]
\node(A_1){$0$};
\node(B_1)[right of=A_1]{$\pi_{i,r}$};
\node(C_1) [right of=B_1] {$E_2$};
\node(D_1) [right of=C_1] {$\pi_{i,r}$};
\node(E_1) [right of=D_1] {$0$};
\node(A_2)[below of =A_1]{$0$};
\node(B_2)[below of=B_1]{$\tilde{\pi}_r$};
\node(C_2) [below of=C_1] {$E_1$};
\node(D_2) [below of=D_1] {$\pi_{i,r}$};
\node(E_2) [right of=D_2] {$0$};

\draw[->] (A_1) -- (B_1);
\draw[->] (B_1) -- (C_1);
\draw[->](C_1) -- (D_1);
\draw[->](D_1) -- (E_1);
\draw[->](A_2) -- (B_2);
\draw[->](B_2) -- (C_2);
\draw[->] (C_2) -- (D_2);
\draw[->](D_2) -- (E_2);
\draw[->](B_1) -- (B_2);
\draw[->](C_1) -- (C_2);
\draw[->](D_1) -- (D_2);
\end{tikzpicture} 
\end{equation}
The self extension $E_2$ of $\pi_{i,r}$ is non-split
and the induced map $\delta_{E_2}$ is non-zero. To see this consider the
long exact sequence in cohomology attached to  \eqref{pushforward}:
\begin{center}
$\begin{tikzpicture}[node distance=2cm,auto]
\node(A_1){$0$};
\node(B_1)[right of=A_1]{$\mathcal{I}(\pi_{i,r})$};
\node(C_1) [right of=B_1] {$\mathcal{I}(E_2)$};
\node(D_1) [right of=C_1] {$\mathcal{I}(\pi_{i,r})$};
\node(E_1) [right of=D_1] {$\mathbb{R}^1\mathcal{I}(\pi_{i,r})$};
\node(A_2)[below of =A_1]{$0$};
\node(B_2)[below of=B_1]{$\mathcal{I}(\tilde{\pi}_r)$};
\node(C_2) [below of=C_1] {$\mathcal{I}(E_1)$};
\node(D_2) [below of=D_1] {$\mathcal{I}(\pi_{i,r})$};
\node(E_2) [right of=D_2] {$\mathbb{R}^1\mathcal{I}(\tilde{\pi}_r)$};
\draw[->] (A_1) -- (B_1);
\draw[->] (B_1) to node[above]{$g$}(C_1);
\draw[->](C_1) -- (D_1);
\draw[->](D_1) to node[above]{$\delta_3$}(E_1);
\draw[->](A_2) -- (B_2);
\draw[->](B_2) to node[above]{$\simeq$}(C_2);
\draw[->] (C_2) to node[above]{$0$}(D_2);
\draw[->](D_2) to node[above]{$\delta_2$}(E_2);
\draw[->](B_2) -- (B_1);
\draw[->](C_2) -- (C_1);
\draw[->](D_2) -- (D_1);
\draw[->](E_2) to node[right]{$\mathbb{R}^1\mathcal{I}(p_2)$}(E_1);
\end{tikzpicture} $
\end{center}
Note that $\mathbb{R}^1\mathcal{I}(\tilde{\pi}_r)$ is isomorphic to
$\mathbb{R}^1\mathcal{I}(\pi_{0,r})\oplus
\mathbb{R}^1\mathcal{I}(\pi_{1,r})$ and $\mathbb{R}^1\mathcal{I}(p_2)$
is the projection map. This shows that
$\mathbb{R}^1\mathcal{I}(p_2)\delta_2\neq 0$ and hence
$\delta_3\neq 0$ using which we get that $g$ is an isomorphism. This
shows that $E_2$ is a non-split self-extension of $\pi_{i, r}$ by
$\pi_{i,r}$.
\end{proof}
\begin{corollary}\label{first_bound}
  For any integer $r$ such that $0<r< (p-1)/2$ we have
  $\dim_{\bar{\mathbb{F}}_p}\Ext^1_G(\pi_{i, r}, \pi_{i,r})\geq 1$. 
\end{corollary}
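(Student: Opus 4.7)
The plan is to extract the bound directly from the construction carried out in the preceding lemma. For any $r$ with $0<r\leq (p-1)/2$, and in particular for the range $0<r<(p-1)/2$ treated here, that lemma produces, via the pushout diagram \eqref{pushforward}, a self-extension
$$ 0 \to \pi_{i,r} \to E_2 \to \pi_{i,r} \to 0 $$
whose associated connecting homomorphism $\delta_{E_2}\colon \mathcal{I}(\pi_{i,r}) \to \mathbb{R}^1\mathcal{I}(\pi_{i,r})$ is non-zero. This is exactly the image of the class $[E_2]$ under the boundary map $\delta$ of the five-term sequence \eqref{fiveterm}, so all that is left is to upgrade the non-vanishing of $\delta_{E_2}$ to the non-vanishing of $[E_2]$ in $\Ext^1_{G_S}(\pi_{i,r}, \pi_{i,r})$.

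This upgrade is automatic. If $E_2$ were split, then applying the functor $\mathcal{I}$ and its right derived functors would give a split long exact sequence, in which case the connecting map $\delta_{E_2}$ would vanish. Since the lemma tells us $\delta_{E_2}\neq 0$, the extension $E_2$ must be non-split, and therefore it represents a non-trivial class in $\Ext^1_{G_S}(\pi_{i,r}, \pi_{i,r})$. The one-dimensional subspace spanned by $[E_2]$ then yields the desired lower bound $\dim_{\bar{\mathbb{F}}_p} \Ext^1_{G_S}(\pi_{i,r}, \pi_{i,r}) \geq 1$.

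There is essentially no obstacle at this step: the delicate homological work, namely the successive pullback from a self-extension of $\tilde{\pi}_r$ and the pushout along $\tilde{\pi}_r \to \pi_{i,r}$, was already carried out in the preceding lemma together with the verification that $\mathbb{R}^1\mathcal{I}(p_2)\circ \delta_2 \neq 0$. The genuinely hard part lies ahead, namely the matching upper bound $\leq 1$ needed to complete the $i=j$ case of Theorem \ref{main_theorem}, which will require invoking Pa\v{s}k\={u}nas's bounds on $\Ext^1_K(\sigma, \pi(\sigma, \mu_\lambda))$ rather than the spectral sequence alone.
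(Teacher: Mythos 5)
Your argument is correct and is essentially the paper's own: the corollary is read off directly from the preceding lemma, since the non-vanishing of the connecting map $\delta_{E_2}$ forces the extension $E_2$ produced by the pushout \eqref{pushforward} to be non-split, giving a non-zero class in $\Ext^1_{G_S}(\pi_{i,r},\pi_{i,r})$. No further comment is needed.
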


\begin{theorem}\label{main_theorem_final}
  Let $p\geq 5$ and $0\leq r< (p-1)/2$ then the dimension of
  $\Ext_{G_S}^1 (\pi_{i,r}, \pi_{i,r})$ is $1$ and dimension of
  $\Ext_{G_S}^1 (\pi_{i,r}, \pi_{j,r})$ is $2$ for $i\neq j$.  For
  $r=(p-1)/2$ the dimension of $\Ext_{G_S}^1 (\pi_{0,r}, \pi_{0,r})$
  is $3$.
\end{theorem}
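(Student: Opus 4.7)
The plan is to combine lower bounds from the five-term exact sequence \eqref{fiveterm} (of which Corollaries \ref{cross_extensions} and \ref{first_bound} are special cases) with upper bounds obtained from Pa\v{s}k\={u}nas's computation of $\Ext^1_K(\sigma_r, \tilde\pi_r)$ in \cite{paskunas_extensions}, as outlined in the introduction.

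First I would apply \eqref{fiveterm} with $\tau=\pi_{i,r}$, $\sigma=\pi_{j,r}$, together with Lemmas \ref{regular_hecke_ext}, \ref{iwahori_hecke_ext} and Proposition \ref{paskunas_presentation}, to obtain a priori upper bounds. For $0<r<(p-1)/2$ with $i=j$ the sequence reads
$$0\to \Ext^1_{G_S}(\pi_{i,r},\pi_{i,r}) \xrightarrow{\delta} \ho_{\mathcal{H}}(M(i,r), M(i,r)^{\oplus 2}) \simeq \bar{\mathbb{F}}_p^2,$$
so the dimension is at most $2$. For $r=(p-1)/2$ the same machinery gives the upper bound $2+2=4$, and for $r=0$, summing over the four pairs $(i,j)\in\{0,1\}^2$ yields the total bound $2+4=6$.

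To refine the self-extension bounds I would pass from $G_S$ to its maximal compact subgroups: given an extension $E$ of $\pi_{i,r}$ by itself, restriction to $K_0$ followed by pullback along the inclusion of the $K_0$-socle component $\sigma_r\subseteq \pi_{i,r}$ produces an element of $\Ext^1_{K_0}(\sigma_r,\pi_{i,r})$, and the analogous construction at the second vertex $K_1$ supplies a parallel constraint. Pa\v{s}k\={u}nas's bound on $\Ext^1_K(\sigma_r,\tilde\pi_r)$, combined with the decomposition $\tilde\pi_r\simeq \pi_{0,r}\oplus \pi_{1,r}$ from Proposition \ref{paskunas_presentation}, forces $\dim\img\delta\leq 1$, while the non-vanishing of $\delta$ established in the lemma preceding Corollary \ref{first_bound} supplies the matching lower bound. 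Hence $\dim \Ext^1_{G_S}(\pi_{i,r},\pi_{i,r})=0+1=1$ for $0<r<(p-1)/2$, and $\dim \Ext^1_{G_S}(\pi_{0,r},\pi_{0,r})=2+1=3$ for $r=(p-1)/2$. For $r=0$, Lemma \ref{iwahori_hecke_ext} already supplies a lower bound of $1$ on the cross extensions; an additional pushout argument along $\pi_{i,0}\hookrightarrow \tilde\pi_0$ mimicking the proof preceding Corollary \ref{first_bound} produces non-split self-extensions and a second cross-extension direction, saturating the total upper bound $6$ as $1+2+2+1$.

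The main obstacle will be the extraction of sharp constraints on $\img\delta$ from Pa\v{s}k\={u}nas's $\Ext^1_K$ estimates, since those are stated for the $G$-representation $\tilde\pi_r$ and the full maximal compact $K$ rather than for $G_S$ and the two inequivalent vertices $K_0, K_1$. The case $r=(p-1)/2$ is especially delicate because $\pi_{0,r}\simeq \pi_{1,r}$ forces the $K$-socle of $\tilde\pi_r$ to be $\sigma_r^{\oplus 2}$ as a $K_S$-module, and it is precisely this extra multiplicity that accounts for the third dimension in $\Ext^1_{G_S}(\pi_{0,r},\pi_{0,r})$.
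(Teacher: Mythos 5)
Your assembly of the lower bounds is sound and matches the paper: the five-term sequence \eqref{fiveterm} together with Lemmas \ref{regular_hecke_ext}, \ref{iwahori_hecke_ext} and Proposition \ref{paskunas_presentation} gives the a priori bound $\dim\Ext^1_{G_S}(\pi_{i,r},\pi_{i,r})\leq 2$ for $0<r<(p-1)/2$ (and $\leq 4$ for $r=(p-1)/2$), and Corollaries \ref{cross_extensions} and \ref{first_bound} give the lower bounds. But the decisive step --- improving the self-extension bound from $2$ to $1$ (resp.\ from $4$ to $3$) --- is exactly the point you assert rather than prove. You claim that restricting an extension to $K_0$ and pulling back along $\sigma_r\subseteq\pi_{i,r}$, combined with Pa\v{s}k\={u}nas's bound on $\Ext^1_K(\sigma_r,\tilde{\pi}_r)$, ``forces $\dim\img\delta\leq 1$,'' and then you yourself flag the extraction of this constraint as ``the main obstacle.'' That obstacle is real: your map $\Ext^1_{G_S}(\pi_{i,r},\pi_{i,r})\to\Ext^1_{K_0}(\sigma_r,\pi_{i,r})$ has no evident injectivity, Pa\v{s}k\={u}nas's estimate $\dim\Ext^1_{K/Z_1}(\sigma_r,\tilde{\pi}_r)\leq 2$ is a bound for the full $K$ acting on the full $\tilde{\pi}_r$ and you would need to know how it distributes between the two $G_S$-constituents $\pi_{0,r}$ and $\pi_{1,r}$, and nothing you have written controls either issue. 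So the proposal as it stands does not close the argument.

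The paper avoids a per-summand bound entirely. It bounds the \emph{total} space $\Ext^1_{G_S}(\tilde{\pi}_r,\tilde{\pi}_r)=\bigoplus_{i,j}\Ext^1_{G_S}(\pi_{i,r},\pi_{j,r})$ by applying $\ho_{G_S}(-,\tilde{\pi}_r)$ to the presentation $0\to\ind_{ZK}^{G}\sigma_r\xrightarrow{T}\ind_{ZK}^{G}\sigma_r\to\tilde{\pi}_r\to 0$, decomposing $\res_{G_S}\ind_{ZK}^{G}\sigma_r$ by Mackey theory over the two classes of maximal compacts, and invoking Shapiro's lemma; Pa\v{s}k\={u}nas's computations of $\ho_{K/Z_1}(\sigma_r\otimes\det^a,\tilde{\pi}_r)$ and $\Ext^1_{K/Z_1}(\sigma_r\otimes\det^a,\tilde{\pi}_r)$ then give the total bound $6$ for $0\leq r<(p-1)/2$ and $12$ for $r=(p-1)/2$. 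Since the two cross terms are known \emph{exactly} to be $2$ each (Corollary \ref{cross_extensions}) and the two self terms are each $\geq 1$, the count $2+2+1+1=6$ forces the self terms to be exactly $1$; similarly $12/4=3$ handles $r=(p-1)/2$. This ``global count'' is the missing idea in your write-up. Separately, your $r=0$ argument leans on mimicking the pushout lemma, which is only proved for $0<r\leq(p-1)/2$ (its proof uses that $\mathcal{I}(\tilde{\pi}_r)$ is irreducible of dimension $2$ over the Hecke algebra of $G$ and that $\delta_E\neq 0$ for some self-extension of $\tilde{\pi}_r$, neither of which you verify for $r=0$); the paper instead imports Pa\v{s}k\={u}nas's computation of $\Ext^1_{G^+/Z}(\pi_{i,0},\pi_{j,0})$ and the injection $\Ext^1_{G^+/Z}\hookrightarrow\Ext^1_{G_S}$ to saturate the upper bound of $6$. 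You should either adopt the global-count mechanism or supply the missing injectivity and distribution statements for your restriction-pullback map.
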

\begin{proof}
  The subgroup $G_SZ$ is an index $2$ subgroup of $G$ and $\id$ and
  $\Pi$ are two double coset representatives for $K\backslash G/G_SZ$.
  We note that $K\cap G_S$ and $K^{\Pi}\cap G_S$ are representatives
  for the two distinct classes of maximal compact subgroups of $G_S$
  and we denote them by $K_1$ and $K_2$ respectively. Let $\sigma_r'$
  be the representation $\sigma_r^{\Pi}$ of $K^{\Pi}$. Using
  Mackey-decomposition we get that
\begin{equation}\label{gl_2-sl_2-res}
  \res_{G_S}\ind_{KZ}^{G}\sigma_r=
  \ind_{K^{\Pi}\cap G_S}^{G_S}\sigma_r^{\Pi}\oplus \ind_{K\cap G_S}^{G_S}\sigma_r
=\ind_{K_1}^{G_S}\sigma_r\oplus \ind_{K_2}^{G_S}\sigma_r'.
\end{equation}
using  the long exact sequence of $\Ext$ groups for the exact
sequence, 
$$0\rightarrow \ind_{ZK}^{G}\sigma_r\xrightarrow{T}
\ind_{ZK}^{G}\rightarrow \tilde{\pi}_r\rightarrow 0$$
we get that an exact sequence 
\begin{equation}\label{sandwich_sequence}
  \ho_{G}(\ind_{ZK}^{G}\sigma_r, \tilde{\pi}_r)\rightarrow
  \Ext^1_G(\tilde{\pi}_r, \tilde{\pi}_r)\rightarrow
  \Ext^1_G(\ind_{ZK}^{G}\sigma_r, \tilde{\pi}_r)
  \xrightarrow{T}\Ext^1_G(\ind_{ZK}^G\sigma_r, 
  \tilde{\pi}_r).
\end{equation} 
Now using \eqref{gl_2-sl_2-res} the exact sequence
\eqref{sandwich_sequence} becomes
\begin{equation}\label{sandwich_sequence_1}
  0\rightarrow \ho_{K_1}(\sigma_r, \tilde{\pi}_r)\oplus
  \ho_{K_2}(\sigma_r', \tilde{\pi}_r)\rightarrow
  \Ext^1_G(\tilde{\pi}_r, \tilde{\pi}_r)\rightarrow
  \Ext^1_{K_1}(\sigma_r, \tilde{\pi}_r)\oplus
  \Ext^1_{K_2}(\sigma_r', \tilde{\pi}_r).
\end{equation}
The groups $K_1$ is contained in $K/Z_1$. For all $i\geq 0$ we note
that
$$\Ext^i_{K_1}(\sigma_r, \tilde{\pi}_r)\simeq 
\Ext^i_{K/Z_1}(\ind_{K_1}^{K/Z_1}(\sigma_r), \tilde{\pi}_r) \simeq
\bigoplus_{0\leq a<p-1}\Ext^i_{K/Z_1}(\sigma_r\otimes{\rm det}^{a},
\tilde{\pi}_r).$$ 

The spaces
$\Ext^1_{K/Z_1}(\sigma_r\otimes{\rm det}^{a}, \tilde{\pi}_r)$ can be
calculated from the work of Pa\v{s}k\={u}nas. We recall his
calculations as needed. There exists a $G$ smooth representation
$\Omega$ such that $\res_K\Omega$ is an injective envelope of
${\rm Soc}_K(\tilde{\pi}_r)$ in the category of smooth representations
of $K$. In particular we get that $\tilde{\pi}_r$ is contained in
$\Omega$.  The restriction $\res_K\Omega$ is isomorphic to
${\rm inj}\sigma_r\oplus {\rm inj}\sigma_{p-1-r}$. Now
$\Ext^1_{K/Z_1}(\sigma_r\otimes{\rm det}^a, \tilde{\pi}_r)$ is
isomorphic to
$\ho_{K/Z_1}(\sigma_r\otimes{\rm det}^a, \Omega/\tilde{\pi}_r)$.  

{\bf We now use the notations from \cite[Notations, Section
  9]{paskunas_extensions}}. We make one modification. Pa\v{s}k\={u}nas
uses the notation $\chi$ for the character
$$\begin{pmatrix}[\lambda]&0\\0&[\mu]\end{pmatrix}
\mapsto (\lambda)^r(\lambda\mu)^a$$ for all
$\lambda, \mu \in \mathbb{F}_p^{\times}$ and $[\ ]$ is the Teichmuller
lift.  For convenience we use the notation $\chi_{a,r}$ instead of
$\chi$. The idempotent $e_{\chi}$ in \cite[Section
9]{paskunas_extensions} will be denoted $e_{\chi_{a,r}}$.  The space
$\ho_{K_1}(\sigma_r\otimes{\rm det}^a, \Omega/\tilde{\pi}_r)$ is the
same as
\begin{equation}\label{ext_cpt_hom}
\ker(\mathcal{I}(\Omega/\tilde{\pi}_r)e_{\chi_{r,a}}
\xrightarrow{T_{n_s}}\mathcal{I}(\Omega/\tilde{\pi}_r)e_{\chi_{r,a}^s})
\end{equation}
and from \cite[Proposition 10.10]{paskunas_extensions} has dimension
less than or equal to $2$. For $0\leq r\leq (p-1)/2$ the space
$\ho_{K/Z_1}(\sigma_r\otimes{\rm det}^a, \tilde{\pi}_r)$ is non-zero
if and only if $a=0$ and has dimension $1$ if $r<(p-1)/2$ and $2$
otherwise. Using \eqref{ext_cpt_hom} for $0\leq r<(p-1)/2$ the space
$\Ext^1_{K/Z_1}(\sigma_r\otimes{\rm det}^a, \tilde{\pi}_r)$ is
non-zero if and only if $a=0$ and has dimension at most $2$ (see
\cite[Proposition 10.10]{paskunas_extensions} for $0<r<(p-1)/2$ and
\cite[Corollary 6.13 and Corollary 6.16]{paskunas_colmez_functor} for
$r=0$). When $r=(p-1)/2$ the space
$\Ext^1_{K/Z_1}(\sigma_r\otimes{\rm det}^a, \tilde{\pi}_r)$ is
non-zero for $a=0$ and $a=(p-1)/2$ and in each of these cases the
dimension of the space
$\Ext^1_{K/Z_1}(\sigma_r\otimes{\rm det}^a, \tilde{\pi}_r)$ is less
than or equal to $2$.

Now using exact sequence \eqref{sandwich_sequence_1} the space
$\Ext^1_{G_S}(\tilde{\pi}_r, \tilde{\pi}_r)$ has dimension less than
or equal to $6$ for $0\leq r<(p-1)/2$ and its dimension is less than
or equal to $12$ if $r=(p-1)/2$. For $r\neq 0$ using this upper bound
and the lower bounds from Corollary \ref{first_bound} and Corollary
\ref{cross_extensions} we deduce the theorem in this case. When $r=0$
Pa\v{s}k\={u}nas showed that (see \cite[Proposition
6.15]{paskunas_extensions}) the dimension of
$\Ext_{G^{+}/Z}^1(\pi_{i,0}, \pi_{j,0})$ is $2$ when $i\neq j$ and $1$
otherwise. Since $G_S/\{\pm 1\}$ has index a factor of $2$ in $G^+/Z$
and $G_S\cap Z$ acts trivially on $\pi_{i,0}$ we get that
\begin{equation}\label{lower_bounds_iwahori}
    \Ext_{G^{+}/Z}^1(\pi_{i,0}, \pi_{j,0})\hookrightarrow
    \Ext_{G_S/\{\pm1\}}^1(\pi_{i,0}, \pi_{j,0})=\Ext_{G_S}^1(\pi_{i,0},
    \pi_{j,0}).
\end{equation}
From our upper bounds the inclusions \eqref{lower_bounds_iwahori} are
strict and hence we prove the theorem.
\end{proof}
\begin{corollary}
The Hecke module $\mathbb{R}^1\mathcal{I}(\pi_{i,0})$ is isomorphic to
the module $\mathcal{I}(\pi_{i,0})\oplus\mathcal{I}(\pi_{j,0})$ for
$i\neq j$. 
\end{corollary}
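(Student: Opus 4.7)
The plan is to identify the $\mathcal{H}$-module structure of $\mathbb{R}^1\mathcal{I}(\pi_{i,0})$ by combining the global constraint from Proposition \ref{paskunas_presentation} with the lower bounds on $\ho_{\mathcal{H}}$-spaces extracted from the five-term exact sequence \eqref{fiveterm} and the dimension counts in Theorem \ref{main_theorem_final}.

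First, from Proposition \ref{paskunas_presentation} the total module $\mathbb{R}^1\mathcal{I}(\pi_{0,0})\oplus \mathbb{R}^1\mathcal{I}(\pi_{1,0})$ is isomorphic to $M(0,0)^{\oplus 2}\oplus M(1,0)^{\oplus 2}$. Since $M(0,0)=\chi_{\id,J}$ and $M(1,0)=\chi_{\id,I}$ are distinct simple $\mathcal{H}$-modules, this sum is semisimple, so Krull--Schmidt forces each summand to take the shape
\begin{equation*}
\mathbb{R}^1\mathcal{I}(\pi_{i,0})\simeq M(0,0)^{\oplus a_i}\oplus M(1,0)^{\oplus b_i}
\end{equation*}
for some non-negative integers $a_i,b_i$ with $a_0+a_1=b_0+b_1=2$. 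The task therefore reduces to showing $a_i=b_i=1$ for each $i\in\{0,1\}$.

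Next I would apply the five-term sequence \eqref{fiveterm} with $\tau=\pi_{i,0}$ and $\sigma=\pi_{j,0}$ to get the lower bound
\begin{equation*}
\dim_{\bar{\mathbb{F}}_p}\ho_{\mathcal{H}}(M(i,0),\mathbb{R}^1\mathcal{I}(\pi_{j,0}))\geq \dim_{\bar{\mathbb{F}}_p}\Ext^1_{G_S}(\pi_{i,0},\pi_{j,0})-\dim_{\bar{\mathbb{F}}_p}\Ext^1_{\mathcal{H}}(M(i,0),M(j,0)).
\end{equation*}
By Theorem \ref{main_theorem_final} and Lemma \ref{iwahori_hecke_ext} the right-hand side equals $1-0=1$ when $i=j$ and $2-1=1$ when $i\neq j$, so the bound is $1$ in all four cases. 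Since $\ho_{\mathcal{H}}(M(i,0),M(k,0))$ is one-dimensional exactly when $i=k$ and zero otherwise, the left-hand side evaluated on our proposed decomposition equals $a_j$ when $i=0$ and $b_j$ when $i=1$. Hence $a_j\geq 1$ and $b_j\geq 1$ for each $j\in\{0,1\}$; combined with $a_0+a_1=b_0+b_1=2$ this forces all four multiplicities to equal $1$, yielding $\mathbb{R}^1\mathcal{I}(\pi_{i,0})\simeq\mathcal{I}(\pi_{0,0})\oplus\mathcal{I}(\pi_{1,0})=\mathcal{I}(\pi_{i,0})\oplus\mathcal{I}(\pi_{j,0})$ for $i\neq j$.

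There is no real obstacle: all the substantive input (the $\Ext^1_{G_S}$ dimensions in Theorem \ref{main_theorem_final} and Pa\v{s}k\={u}nas' computation of the total $\mathbb{R}\mathcal{I}$-module in Proposition \ref{paskunas_presentation}) is already in hand, and the corollary follows as a formal Hom-count on a semisimple Hecke module. The only point requiring attention is verifying that $M(0,0)$ and $M(1,0)$ are non-isomorphic simples, which is the content of the discussion preceding Lemma \ref{iwahori_hecke_ext}.
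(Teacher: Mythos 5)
Your proposal is correct and follows essentially the same route as the paper: both extract lower bounds on $\ho_{\mathcal{H}}(M(i,0),\mathbb{R}^1\mathcal{I}(\pi_{j,0}))$ from the five-term sequence \eqref{fiveterm} together with the dimension counts of Theorem \ref{main_theorem_final} and Lemma \ref{iwahori_hecke_ext}, and then pin down the multiplicities using the total semisimple module computed in Proposition \ref{paskunas_presentation}. Your write-up is in fact more complete than the paper's, which only records the diagonal Hom-space explicitly; running through all four pairs $(i,j)$ as you do is what is actually needed to force every multiplicity to equal $1$.
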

\begin{proof}
From the Theorem \ref{main_theorem_final}, exact sequence
\eqref{fiveterm} and \eqref{iwahori_trivial_ext} we get that dimension
of the space $\ho_{\mathcal{H}}(\mathcal{I}(\pi_{0,0}),
\mathbb{R}^1\mathcal{I}(\pi_{0,0}))$ is $1$. Using the Proposition
\ref{paskunas_presentation} we get that   
$$\mathbb{R}^1\mathcal{I}(\pi_{i,0})\simeq
\mathcal{I}(\pi_{i,0})\oplus \mathcal{I}(\pi_{j,0}).$$
\end{proof}
\bibliography{../biblio} \bibliographystyle{amsalpha}
\noindent Santosh Nadimpalli,\\
School of Mathematics, Tata Institute of Fundamental Research,
Mumbai, 400005.\\
\texttt{nvrnsantosh@gmail.com}, \texttt{nsantosh@math.tifr.res.in}
\end{document}